\documentclass[12pt]{amsart} 
\usepackage{amsmath}
\usepackage{changes}
\usepackage{amsthm}
\usepackage{amsfonts} 
\usepackage{hyperref}
\usepackage[framemethod=TikZ]{mdframed}
\usepackage{epsfig}
\usepackage{amssymb}
\usepackage{mathrsfs}
\usepackage{tcolorbox}
\DeclareMathOperator*{\argmin}{arg\,min}
\setlength{\parskip}{8pt}
\mdfdefinestyle{MyFrame}{%
    linecolor=gray!90!white,
    backgroundcolor=gray!0!white}
\addtolength{\textwidth}{1.4cm}
\hoffset=-0.7cm

\numberwithin{equation}{section}

\allowdisplaybreaks

\def\parent#1{#1^{-1}}
\def\r{\varrho}



\newcommand{\eqa}{\begin{eqnarray}}
\newcommand{\ena}{\end{eqnarray}}
\newcommand{\eq}{\begin{equation}}
\newcommand{\en}{\end{equation}}
\newcommand{\eqs}{\begin{eqnarray*}}
\newcommand{\ens}{\end{eqnarray*}}

\def\emptyset{\varnothing} 
\def\ti{\to\infty} 

\def\X{\mathbf{X}}

\def\r{\varrho} 


\def\L{\Lambda} 
 

\newcommand{\Z}     {\mathbb{Z}} 
\newcommand{\N}     {\mathbb{N}} 
\renewcommand{\P}   {\mathbb{P}} 
\newcommand{\E}     {\mathbb{E}}

 \newcommand{\floor}[1]{\left\lfloor #1 \right\rfloor}

\def\1{{\mathchoice {1\mskip-4mu\mathrm l}      
{1\mskip-4mu\mathrm l} 
{1\mskip-4.5mu\mathrm l} {1\mskip-5mu\mathrm l}}} 
\newcommand{\ssup}[1] {{{\scriptscriptstyle{({#1}})}}} 
\def\comment#1{} 
 
 

 

\renewcommand{\d}{{\rm d}} 
 
\newcommand{\eps}{\varepsilon}


\newcommand{\Ccal}   {{\mathcal C }}

\newcommand{\Gcal}   {{\mathcal G }}

 
 
 



\def\ignore#1{}
\def\Eq{\ =\ }

\def\Def{:=}

\def\iid{i.i.d.}
\def\Gr{{\rm GR}}
\def\Grm{{\rm GR}^-}

\def\bbE{\mathbb{E}}

\def\bP{\mathbf{P}}
\def\bX{\mathbf{X}}

\def\bE{\mathbf{E}}

\def\parent#1{#1^{-1}}

\def\adb{}

\newtheorem{theorem}{Theorem}
\newtheorem{proposition}[theorem]{Proposition}
\newtheorem{lemma}[theorem]{Lemma}
\newtheorem{remark}[theorem]{Remark}

\newtheorem{definition}[theorem]{Definition}

\newtheorem*{ack}{Acknowledgement}

\renewcommand{\epsilon}{\varepsilon}

\title[Functional CLT for RWRE]{Functional central limit theorem for random walks in random environment defined on regular trees
 }
\date{}
\author[A.~Collevecchio]{Andrea Collevecchio}
\address{Andrea Collevecchio\\ School of Mathematical Sciences, Monash  University, Melbourne} \email{Andrea.Collevecchio@monash.edu}
\author[M.~Takei]{Masato Takei}
\address{Masato Takei\\ Department of Applied Mathematics, Faculty of Engineering, Yokohama National University} \email{takei-masato-fx@ynu.ac.jp}
\author[Y.~Uematsu]{Yuma Uematsu}
\address{Yuma Uematsu\\ Department of Systems Integration, Graduate School of Engineering, Yokohama National University} \email{uematsu-yuuma-zs@ynu.jp}
\keywords{Random Walks in Random Environment, Self-interacting random walks, Functional central limit theorem}
\date{}
\begin{document}

\begin{abstract}

We study Random Walks in  an i.i.d. Random  Environment (RWRE) defined on $b$-regular trees.~We prove a functional central limit theorem (FCLT)  for transient processes, under a moment condition on the environment. We emphasize that we make no {uniform ellipticity} assumptions.  Our approach relies  on regenerative levels, i.e. levels that are visited exactly once. On the way, we prove that the distance  between consecutive regenerative levels have a geometrically decaying tail.
In the second part of this paper, we apply our results to Linearly Edge-Reinforced Random Walk (LERRW) to prove FCLT when the process is defined on $b$-regular trees, with $ b \ge 4$, substantially improving the results of the first author (see Theorem 3 of \cite{Coll06a}). 

\end{abstract}

\maketitle
\section{introduction}
Random Walk in Random Environment (RWRE)  is a class of {self-interacting processes} that attracted much attention from probabilists since the  seminal work of Kesten, Kozlov, and Spitzer \cite{Kes1} and Solomon \cite{Sol}, in the 70\rq{}s. It seems that  the initial motivation behind this class of process  was related to problems in biology, crystallography and metal physics.  The {interest in this field} grew substantially,   and we refer to \cite{Bog} and \cite{Zei} for an overview of this beautiful subject.
 
We study random walks in  an i.i.d. random  environment defined on $b$-regular trees. We provide a functional central limit theorem (FCLT)  for  processes that are transient, assuming  a moment condition on the environment.   Our approach relies  on regenerative levels, i.e. levels that are visited exactly once. On the way, we prove that the space between these regenerative levels have a geometrically decaying tail. We emphasize that we make no {uniform ellipticity} assumptions.  

{To the best of our knowledge, this is the first  of this type for RWRE on trees {without uniform ellipticity (UE) condition. A FCLT when the environment is UE  is a straightforward consenquence of Proposition 2.2 in \cite{Aid1}, which gives a stretched exponential bound for the regenerative times under UE. This trivial implication was pointed out in \cite{ColS} page 1097 without mentioning that Proposition 2.2 in [2] requires UE. Moreover, Proposition 3.9 in   \cite{ColS}  provides bounds for the covariance {\bf when} the FCLT  holds. The assumptions in Proposition 3.9 are very strict, and comparable to UE. In particular, Linearly Edge-Reinforced Random Walk do not satisfy the assumptions of  Proposition 3.9 in   \cite{ColS}.}  A FCLT was proved by Peres and Zeitouni for biased random walks on Galton-Watson trees (see \cite{Per})}
 
In the second part of this paper, we apply our results to Linearly Edge-Reinforced Random Walk (LERRW), a model introduced in \cite{CD} and which is described below, after Remark~\ref{due}.  
We  prove FCLT for LERRW when the process is defined on $b$-regular trees, with $ b \ge 4$, substantially improving the results of the first author (see Theorem 3 of \cite{Coll06a}).   Moreover, our results  can be combined with the ones of Zhang  \cite{Zhang}  and provide upper large deviations results for $b$-regular trees with $b \ge 4$, which could be improved, with extra computations, to $b \ge 3$ (see Remark~\ref{Zh} below).

Fix an integer $b \in \N$. Let $\Gcal = (V, E)$ be an infinite  $b$-regular tree with root~$\r$.   We augment~$\Gcal$ by adjoining a parent 
$\parent{\r}$ to the root $\r$. In this graph each vertex  has degree $b+1$, with the exception of $\parent{\r}$ that has degree one. If two vertices $\nu$  and $\mu$ are 
the endpoints of the same edge, they are said to be neighbours, and this property is 
denoted by $\nu \sim \mu$. 
The distance $|\nu - \mu|$ between any pair of  vertices $\nu, \mu$, not necessarily adjacent, 
is the number of edges in the unique self-avoiding path connecting $\nu$ to~$\mu$.  For any  other vertex $\nu$, with $\nu \neq \r$,  we let~$|\nu|$ 
be the distance of~$\nu$ from the root~$\r$, i.e.  $|\nu| = |\nu - \r|$. We set $|\parent{\r}| = -1$. 

 We write $\nu < \mu$
if~$\nu$ is an ancestor of~$\mu$, that is if $\nu$ lays on the self-avoiding path connecting $\mu$ to $\r$. Alternatively, we say that $\mu$ is a descendant of $\nu$.  For any vertex $\nu$, denote by $\nu 1, \nu 2, \ldots, \nu b$ its offspring, and by $\parent{\nu}$ its parent.

For $\nu \in V$, let
$$
    \mathbf{A}_{\nu} \Eq (A_{\nu1}, A_{\nu2}, \ldots, A_{\nu b})
$$
to denote the (finite, positive) weights on the edges between $\nu$ and its offspring. 
For simplicity, we index the weight associated to edge~$e$ by the endpoint of~$e$  with larger distance from~$\r$.
The environment~$\omega$ for the random walk on the tree
is then defined, for any vertex $\nu$ with offspring $\nu i$, $1\le i \le b$,  
by the probabilities
\begin{equation}\label{defomega}
  \omega(\nu, \nu i) \Def \frac{A_{\nu i}}{1 + \sum_{1\le j \le b} A_{\nu j}}; 
  \qquad \omega(\nu, \parent{\nu} ) \Def \frac 1{1 + \sum_{1\le j \le b} A_{\nu j}}.
\end{equation}
We set $\omega(\nu,\mu) = 0 $ if $\mu$ and~$\nu$ are not neighbours.
Given the environment $\omega$, we define the random walk 
$\mathbf{X}= \{ X_{n},\, n \ge 0\}$ that starts at~$\r$ to be the Markov chain with
$\mathbf{P}^x_{\omega} (X_{0}=x) =1$, having transition probabilities
$$ 
 \adb{\mathbf{P}^x_{\omega}(X_{n+1} = \mu 1\;|\; X_{n} = \mu) \Eq \omega(\mu, \mu 1).}
$$
whenever $\mu \neq \parent\r$. We set 
$$ \mathbf{P}^x_{\omega}(X_{n+1} = \r \;|\; X_{n} = \parent\r) = 1.$$
The combined probability measure from which the environment is realized is denoted
by~$\mathbb{P}$ and its expectation by $\bbE$, and the semi-direct product 
$\mathbf{P}^x := \mathbb{P} \times \mathbf{P}^x_{\omega}$ represents the annealed measure of the process which starts from vertex $x$.  For simplicity, we use $\mathbf{P}$ and $\mathbf{P}_{\omega}$ respectively for  $\mathbf{P}^{\r}$ and $\mathbf{P}^{\r}_{\omega}$. 
For any vertex $\nu$, set
$$ T_\nu \Def \inf\{ k \ge 0\colon X_k = \nu\}.$$
Sometimes we use $T(\nu)$ instead of $T_\nu$. We are interested in the case (see Assumption A below) where $\bP(T(\nu) = \infty)>0$. Moreover, for $n\in \N$,  let 
$$ T_n \Def  \inf\{ k \ge 0\colon |X_k| = n\}.$$
\begin{mdframed}
[style=MyFrame] 
\noindent{\bf Assumption A} From now on, we suppose that    $\big({\bf A}_\nu\big)_{\nu \in V}$ are \iid,  and
\begin{equation}\label{na0}
\inf_{t \in [0,1]} \E[A^t] > 1/b.
\end{equation}
\end{mdframed}
In particular, condition~\eqref{na0} implies transience of the process, i.e. it visits each vertex only finitely often, a.s.. This result was proved by Lyons and Pemantle \cite{LyoPem}, and see \cite{CollBarb} for a generalization of this result to  Markovian environments.
 A\"id\'ekon (\cite{Aid}, {Theorem 1.5}) proved that  the condition
 \begin{equation}\label{aidcond}
  \E \left[\Big(\sum_{1 \le i \le b} A_{\r i}\Big)^{-1} \right]<\infty,
  \end{equation}
 is sufficient  for  the transient process $\bf{X}$ to have positive speed,   i.e. there exists a positive finite constant $v_b$ such that 
$$ 
\lim_{n \ti} \frac{|X_n|}{n} = v_b, \qquad \bP\mbox{-a.s..}
$$
Denote by  $\floor{x}$ the integer part of $x$. Our main result is the following.
\begin{theorem}[Annealed FCLT] \label{mainth} Under Assumption A,  if we make the  further  assumption
\begin{equation}\label{na1}
  \E \left[\Big(\sum_{1 \le i \le b} A_{\r i}\Big)^{-p} \right]<\infty, \qquad \mbox{ for some $p>2$,}
 \end{equation}
then there exists a  positive constant $\sigma_b$ such that 
\begin{equation}\label{eq:invariance}
\left(\frac{{|X_{nt}| - v_b nt}}{\sqrt{n} \sigma_b}\right)_{t \in [0,1]} \Rightarrow (W_t)_{t \in [0,1]},
\end{equation}
where {$|X_{nt}|$ is the linear interpolation between $|X_{\floor{nt}}|$ and $|X_{\floor{nt}+1}|$ for noninteger values of $nt$,} $(W_t)_t$ is a standard Brownian motion, and $\Rightarrow$ denotes convergence in distribution as $n \to \infty$.
\end{theorem}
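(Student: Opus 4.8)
The plan is to use the regeneration structure the authors have (by assumption here, from earlier in the paper) already established: the existence of \emph{regenerative levels}, levels $0 = L_0 < L_1 < L_2 < \cdots$ that are visited exactly once, with the property that the gaps $\Delta_k := L_{k+1}-L_k$ (and the corresponding time increments) form, after the first one, an i.i.d.\ sequence, and that these gaps have a geometrically decaying tail. First I would set $\tau_k := T_{L_k}$, the (a.s.\ finite, by transience under Assumption A) hitting time of the $k$-th regenerative level, and decompose the trajectory of $|X_n|$ into the i.i.d.\ blocks $\big(\tau_{k+1}-\tau_k,\ L_{k+1}-L_k\big)_{k\ge 1}$. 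The key integrability input is that $\Delta_1$ has a geometric tail (hence all moments), and that the block \emph{time} increment $\tau_2-\tau_1$ has a finite $p$-th moment for the same $p>2$ as in \eqref{na1}: this is where hypothesis \eqref{na1} enters, controlling the time spent between consecutive regenerative levels via the walk's possible excursions back toward the root, whose durations are governed by the negative moments of $1+\sum_i A_{\r i}$.

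Granting that, the argument is the standard one. Let $N(n) := \max\{k : \tau_k \le n\}$ be the number of completed regeneration blocks by time $n$. By the (renewal) law of large numbers, $\tau_k/k \to \bE[\tau_2-\tau_1] =: \bar\tau$ and $L_k/k \to \bE[L_2-L_1] =: \bar L$ a.s., whence $N(n)/n \to 1/\bar\tau$ and, comparing $|X_n|$ at the regeneration levels sandwiching $n$, one recovers $v_b = \bar L/\bar\tau$ (consistent with Aïdékon's speed, which applies here since \eqref{na1} implies \eqref{aidcond}). For the functional statement, write
\begin{equation}\label{eq:planblocks}
  |X_{\tau_k}| - v_b\,\tau_k \;=\; \sum_{j=1}^{k}\Big[(L_j - L_{j-1}) - v_b(\tau_j-\tau_{j-1})\Big] \;+\; O(1),
\end{equation}
a sum of (after $j=1$) i.i.d., mean-zero, square-integrable random variables — square-integrability being exactly what $p>2$ buys for the time increments, the level increments being bounded in every $L^q$. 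Donsker's theorem gives a functional CLT for this sum in the index $k$, with some variance $s^2 > 0$ (nondegeneracy follows because the walk is genuinely random: the blocks are not a.s.\ constant). A time-change by the renewal process $N(\lfloor n\cdot\rfloor)$, together with the a.s.\ convergence $N(nt)/n \to t/\bar\tau$ and a maximal-inequality estimate showing that $|X_n|-v_b n$ does not fluctuate more than $o(\sqrt n)$ \emph{within} a single block (again via the geometric tail of $\Delta_k$ and the $L^p$ bound on block times, so that $\max_{k\le n}(\tau_{k+1}-\tau_k) = o(\sqrt n)$ a.s.\ when $p>2$), upgrades \eqref{eq:planblocks} to a functional CLT for $t\mapsto |X_{nt}|-v_b nt$; the linear interpolation changes nothing since consecutive values of $|X_n|$ differ by at most one. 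The limiting process is Brownian motion after the deterministic time- and space-rescaling, and one reads off $\sigma_b^2 = s^2/\bar\tau^{\,3}$ (the usual renewal-CLT variance formula, with $s^2$ the variance of a single block's centered displacement).

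The main obstacle is the moment bound on the inter-regeneration \emph{time} $\tau_2-\tau_1$: one must show $\bE[(\tau_2-\tau_1)^p]<\infty$ from \eqref{na1}. The spatial gap $\Delta_k$ has a geometric tail by the cited earlier result, but the time to traverse a block can be much larger than the block's height because, without uniform ellipticity, the walk may spend a long time in "traps" — subtrees where the environment pushes it back toward the root. Bounding this requires combining the geometric tail of $\Delta_k$ with a quantitative estimate, on the event that the block has height $\ell$, of the time spent, which in turn rests on the negative-moment hypothesis \eqref{na1} controlling the conductances near the root of each subtree visited; a union bound / Cauchy–Schwarz over the (geometrically few) levels in the block then yields a finite $p$-th moment provided $p>2$ is below the threshold dictated by \eqref{na1}. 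I would expect this to be the technical heart of the proof, with the rest — Donsker, the renewal time-change, and the interpolation — being routine.
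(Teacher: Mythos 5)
Your plan is correct and follows essentially the same route as the paper: regeneration levels/times with i.i.d.\ blocks, the geometric tail of the level increments, a finite second moment for the time increments derived from the negative-moment hypothesis \eqref{na1} (via control of local times of visited vertices), and then the standard sandwich/renewal time-change argument to pass from the i.i.d.\ block CLT to the functional statement. The only small discrepancy is that you announce $\bE[(\tau_2-\tau_1)^p]<\infty$ for the full $p>2$, whereas the paper establishes (and, as you yourself note, only needs) square-integrability of $\tau_2-\tau_1$, obtained by H\"older with exponent $p/2$ from the $p$-th moments of the local times.
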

\begin{remark}\label{due}
We are not assuming that the random variables  $A_{\r i}$  are bounded or bounded away from $0$, i.e. the so-called {uniform} ellipticity assumption.
\end{remark}
\begin{remark}
{We would like to add few words about the topology under which the convergence in \eqref{eq:invariance} takes place. We consider the space of c\`adl\`ag functions on $[0,1]$ equipped with the Borel $\sigma$-algebra generated by the Skorokhod topology.}
\end{remark}
We apply our results to Linearly Edge-Reinforced Random Walk (LERRW) on trees, which is defined as follows. To each edge of the tree, assign initial weight one. These weights are updated depending on the behaviour of the process. LERRW takes values on the vertices of $\Gcal$, at each step it jumps to vertices which are neighbors of the present one, say $x$. The probability to pick a particular neighbor is proportional to the weight of the edge connecting that vertex to $x$. Each time the process traverses an edge, its weight is increased by one.   See \cite{ST} for a surprising connection between LERRW and the Zirnbauer $H^{2/2}$ model.
 When $\Gcal$ is a tree, we can use a random walk in   i.i.d. random environment to study  LERRW.
LERRW   on the binary tree  is transient and has  positive speed, even though does not satisfy  \eqref{aidcond} {(see \cite{Aid}).} Our result is the following and improves Theorem 3 of \cite{Coll06a}.
\begin{theorem}\label{main:LERRW}
Let $\X$ be LERRW on a $b$-regular tree, with $b\ge 4$. Then $\X$ satisfies \eqref{eq:invariance} for some choice of $\sigma_b$.
\end{theorem}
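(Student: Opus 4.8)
The plan is to exhibit $\X$ as a random walk in an i.i.d.\ random environment of precisely the type treated in Theorem~\ref{mainth}, and then to verify Assumption~A and the moment condition~\eqref{na1} for that environment. The first step is the classical representation of edge-reinforced walks on trees as RWRE (due to Pemantle and to Coppersmith--Diaconis; see \cite{Coll06a} for this in the present setting): since the reinforcement is local and the graph is a tree, the annealed law of LERRW with unit initial edge weights on a $b$-regular tree coincides with that of the walk $\X=\{X_n\}$ of the Introduction, where the environment $(\mathbf{A}_\nu)_{\nu\in V}$ is i.i.d.\ with the following law at each vertex $\nu$: the exit distribution $\big(\omega(\nu,\parent\nu),\omega(\nu,\nu1),\dots,\omega(\nu,\nu b)\big)$ is Dirichlet with all $b+1$ parameters equal to $1/2$; equivalently $A_{\nu i}=\gamma_i/\gamma_0$ with $\gamma_0,\dots,\gamma_b$ i.i.d.\ $\mathrm{Gamma}(1/2,1)$. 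In particular a single weight $A:=A_{\nu 1}$ has the law of $B/(1-B)$ with $B\sim\mathrm{Beta}(1/2,1/2)$, while $\sum_{i=1}^{b}A_{\nu i}=(1-Z)/Z$ with $Z:=\omega(\nu,\parent\nu)\sim\mathrm{Beta}(1/2,b/2)$.

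Given this, Assumption~A is immediate, since $\inf_{t\in[0,1]}\E[A^{t}]=\E[A^{0}]=1>1/b$ for every $b\ge 2$ --- this is exactly the Lyons--Pemantle transience criterion, known to hold for LERRW on $b$-regular trees with $b\ge2$; one can also check directly, via the Beta integral and the reflection formula for $\Gamma$, that $\E[A^{t}]=1/\cos(\pi t)$ on $[0,\tfrac12)$ and $\E[A^{t}]=+\infty$ on $[\tfrac12,1]$. For condition~\eqref{na1}, note that $\sum_{i=1}^{b}A_{\r i}=(1-Z)/Z$ with $Z\sim\mathrm{Beta}(1/2,b/2)$, so a Beta integral gives $\E\big[\big(\sum_{i=1}^{b}A_{\r i}\big)^{-p}\big]=\E\big[(Z/(1-Z))^{p}\big]$, which is finite precisely when $p<b/2$. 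Hence for $b\ge 5$ one picks $p\in(2,b/2)$, so that \eqref{na1} holds, and Theorem~\ref{mainth} applies directly, giving~\eqref{eq:invariance}.

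The genuinely delicate case is $b=4$: here $Z\sim\mathrm{Beta}(1/2,2)$, the second moment $\E\big[\big(\sum_{i=1}^{4}A_{\r i}\big)^{-2}\big]$ diverges --- only logarithmically --- so \eqref{na1} just fails and Theorem~\ref{mainth} cannot be quoted off the shelf. The plan for $b=4$ is to re-enter the proof of Theorem~\ref{mainth} at the one place where \eqref{na1} is used, namely the estimate yielding a geometrically decaying tail for the distance between consecutive regenerative levels, and to show that this tail bound survives for the LERRW environment with $b=4$. The key observation is that a single vertex with a near-one backward probability $\omega(\nu,\parent\nu)=1-\delta$ costs a great deal of \emph{time} (of order $1/\delta$) but only a logarithmic amount of extra \emph{depth} of backtracking (of order $\log(1/\delta)$), and under the Dirichlet$(1/2,\dots,1/2)$ law of the environment the latter still has all moments finite; propagating this through the regeneration construction should show that the distance between regenerative levels retains a geometric tail, after which the block decomposition and Donsker argument of Theorem~\ref{mainth} go through unchanged. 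I expect this refinement of the regeneration-tail estimate in the borderline regime $b=4$ to be the main obstacle; the representation step and the two integrability checks above are routine once the Dirichlet$(1/2,\dots,1/2)$ law of the environment has been identified.
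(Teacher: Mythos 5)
Your reduction for $b\ge5$ is essentially the paper's argument, but two details of the environment identification are off. The RWRE representation of LERRW with unit initial weights assigns to each vertex an exit law that is Dirichlet with parameters $(1,1/2,\dots,1/2)$ --- the parent edge carries parameter $1$, not $1/2$, since it has already been traversed once on first arrival --- so $\omega(\nu,\parent{\nu})\sim\mathrm{Beta}(1,b/2)$ and a single weight $A$ is a ratio of a $\mathrm{Gamma}(1/2)$ by an independent $\mathrm{Gamma}(1)$ variable. Your criterion ``\eqref{na1} holds iff $p<b/2$'' happens to be unaffected, so the conclusion for $b\ge5$ stands, but your verification of Assumption A does not: $\inf_{t\in[0,1]}\E[A^t]$ is not $\E[A^0]$ in general, and here $\E[\ln A]=\psi(1/2)-\psi(1)=-2\ln 2<0$, so $t\mapsto\E[A^t]$ is strictly decreasing at $t=0$ and the infimum is attained in the interior, strictly below $1$. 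It is still larger than $1/b$, but this is exactly Pemantle's nontrivial computation, which is what the paper cites; it cannot be dismissed as ``immediate''.

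The genuine gap is the case $b=4$, which is the entire content of the theorem beyond quoting Theorem~\ref{mainth}, and there you have both misdiagnosed where \eqref{na1} is used and left the resulting problem unsolved. Condition \eqref{na1} plays no role in the geometric tail of the regeneration levels: Theorem~\ref{Expotail} holds under Assumption A alone, hence for LERRW on every $b\ge2$, so the estimate you propose to ``refine'' needs no refinement. Where \eqref{na1} actually enters is the proof that $\bE[(\tau_2-\tau_1)^2]<\infty$, via $\E[\beta_\r^{-p}]<\infty$ for some $p>2$ (Proposition~\ref{prop:ku1}) and thence the $p$-th moments of the local times $L_\nu$. For $b=4$ this route is closed, since $\beta_\r^{-1}\ge(\sum_i A_{\r i})^{-1}$ forces $\E[\beta_\r^{-p}]=\infty$ for all $p\ge2$; what must be controlled is the \emph{time} spent at each vertex by a quenched argument that bypasses $\beta_\r^{-p}$. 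The paper does this by introducing $\gamma_\nu(\omega)=\bP^\nu_\omega(T_{\parent{\nu}}=\infty,\,T^+_\nu=\infty)$ and proving $\E\bigl[\bigl(\1_{\{\omega(\nu,\parent{\nu})\le1-\eps\}}/\gamma_\nu\bigr)^{28/9}\bigr]<\infty$ through a careful analysis of the joint Dirichlet law of the exit probabilities (Proposition~\ref{prop:Yuuma4-1}); it then bounds $L_y$ by decomposing the trajectory at the youngest ancestor $Y$ of $y$ with $\omega(Y,\parent{Y})\le1-\eps$, coupling the walk on $[Y,y]$ with a biased walk so that the local time before $T_Y$ is geometric with bounded mean and each later return costs a factor $1-\gamma_Y$, and closing with a H\"older argument against the geometric decay of $\P(Y=z)$ in $|y|-|z|$; this yields $\sup_y\bE[L_y^3]<\infty$ and hence $\bE[\tau_1^{p}]<\infty$ for all $p<3$ (Propositions~\ref{prop:Yuuma4-2} and~\ref{prop:Yuuma4-3}). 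None of this appears in your proposal, and your closing sentence concedes that the ``main obstacle'' remains; as written, the case $b=4$ is not proved.
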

\begin{remark}\label{Zh} If we replace \eqref{na1} with
\begin{equation} \label{na1'}
  \E \left[\Big(\sum_{1 \le i \le b} A_{\r i}\Big)^{-p} \right]<\infty, \qquad \mbox{ for some $p>1$,}
 \end{equation}
then 
we can prove the finiteness of certain moments of certain regenerative times,
which is enough in order to obtain an upper large deviation result for the speed for the case $b \ge 3$, according to a paper of Zhang \cite{Zhang}.
Notice that the previous known result on this was given by Zhang \cite{Zhang} for  $b\ge 70$. 
\end{remark}

\section{Regenerative times and structure of the proof of Theorem~\ref{mainth}}
From now on, $p$ will be used to denote the exponent that satisfies condition \eqref{na1}.
Under  the assumptions of Theorem~\ref{mainth} (more precisely  \eqref{na0}), the process $\bf{X}$ is transient. 
It is natural to  introduce in this context the so-called regenerative times.
\begin{definition} \label{defregene} 
Set $\tau_0=0$.  For $m\in  \N$ define recursively,
$$\tau_m=\inf\left\{k> \tau_{m-1}: \sup_{j<k}|X_j|< |X_k|\le \inf_{j\ge k}|X_j|\right\}.$$
For each $m\in \Z_+$, let $\ell_m=|X_{\tau_m}|$.
\end{definition}
The elements of the process $(\ell_i)_i$ are called cut levels (or regenerative levels). The  regenerative times $(\tau_i)_i$ are the hitting times of the cut levels.
Under the measure $\bP$, the sequences $((\ell_k - \ell_{k-1},\tau_k- \tau_{k-1}))_{k\ge1}$ are independent and, except for the first one, distributed like $(\ell_1,\tau_1)$ under $\bP\left(\cdot|T(\parent{\r}) = \infty\right)$. 
Moreover, based on a result of Zerner (see  Lemma 3.2.5 in \cite{Zei}), it is not difficult to prove that $\bE[\ell_2 - \ell_1]<\infty$.  We prove that $\ell_2 - \ell_1$, under Assumption A, has an exponential tail. To our knowledge, this result is new.
\begin{theorem}\label{Expotail} Under Assumption A, for any $b \ge 2$, we have that 
$$ \bP(\ell_2 - \ell_1 \ge k) \le a^k,$$
for some constant  $a \in (0, 1)$. 
\end{theorem}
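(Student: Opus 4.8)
The plan is to show that from any level already reached, there is a probability bounded below (uniformly, using only the i.i.d.\ structure and Assumption~A) that the next level up is a cut level, and then iterate. Concretely, fix $n$ and condition on the event that $X$ has just reached level $n$ for the first time, at some vertex $v$ with $|v|=n$; write $\omega$ for the environment. A natural sufficient event for $n+1$ to be a regenerative level is: (i) from $v$ the walk steps to the child $w=v1$ before ever backtracking to $\parent{v}$, and (ii) from $w$ the walk never returns to $v$. Under the quenched law this has probability
\[
\omega(v,w)\cdot \bP^{w}_\omega\big(T(v)=\infty\big),
\]
and one would bound the annealed probability of the union of such events from below by a constant $c_0>0$ independent of $n$. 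The subtlety is that $v$ and its environment are not "fresh" — they have been biased by the history of the walk up to $T_n$ — so one wants to use the Markov property at $T_n$ together with the fact that, conditionally on reaching level $n$, the subtree environment strictly below the path traversed so far is still i.i.d.\ and independent of the past, which is exactly the structure exploited in the definition of regenerative times; this is where the standard decomposition (as in Lemma~3.2.5 of \cite{Zei}) enters.

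The first key step is therefore to establish the uniform lower bound $\bP(n+1 \text{ is a cut level}\mid n \text{ is reached})\ge c_0$. For this I would argue: conditionally on $\{|X_{T_n}|=n\}$ and on the visited vertex $v$, the weights $\mathbf A_v$ attached to the children of $v$ are i.i.d.\ copies of $\mathbf A$ (independent of everything seen so far, since the walk has not yet probed them), and the subtree rooted at $w=v1$ carries a fresh i.i.d.\ environment. Then
\[
\bP\big(n+1\text{ cut}\mid |X_{T_n}|=n\big)\ \ge\ \E\!\left[\frac{A_{v1}}{1+\sum_{j} A_{vj}}\,\bP^{w}_\omega(T(v)=\infty)\right].
\]
By Assumption~A the walk on the subtree is transient, so $\bP^{w}(T(v)=\infty)=:q>0$ and by independence of the subtree environment from $\mathbf A_v$ the expectation factors as $\E\!\big[A_{v1}/(1+\sum_j A_{vj})\big]\cdot q$. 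The first factor is a strictly positive constant depending only on the law of $\mathbf A$ and on $b$ (it is positive because $A>0$ a.s.); call the product $c_0$.

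The second step is the iteration giving the geometric tail. Define the sequence of levels $n_0<n_1<\cdots$ at which the walk successively arrives for the first time; between consecutive cut levels $\ell_1<\ell_2$, at each new level the conditional probability of it being a cut level is at least $c_0$, by the first step applied with the Markov property at $T_{n}$ — here one must be slightly careful that "$m$ is a cut level" is not $\mathcal F_{T_m}$-measurable (it looks into the future), but "being a cut level" is implied by the two forward events (i)–(ii) above, which are handled by the strong Markov property at $T_m$ conditionally on the future environment, so the lower bound $c_0$ applies independently at each fresh level. Hence, stochastically, $\ell_2-\ell_1$ is dominated by a geometric random variable with success probability $c_0$, giving $\bP(\ell_2-\ell_1\ge k)\le (1-c_0)^{k}$, i.e.\ $a=1-c_0\in(0,1)$.

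I expect the main obstacle to be the bookkeeping around "fresh" randomness: making precise that, conditionally on the first-passage structure to level $n$, (a) the offspring weights at the newly reached vertex are an undisturbed i.i.d.\ copy of $\mathbf A$, and (b) the environment of the subtree below is i.i.d.\ and independent of the past and of those weights, and (c) these facts survive conditioning on $\{T(\parent\r)=\infty\}$ used to define the regeneration law. Equivalently, one must show the lower bound $c_0$ holds for the conditional probability that level $m{+}1$ is regenerative given $\{\ell_1,\dots\}$ and the trajectory up to $T_m$, uniformly in $m$; once that is in hand the geometric domination is immediate and the theorem follows. A secondary, purely technical point is confirming $\E\!\big[A_{\r1}/(1+\sum_j A_{\r j})\big]>0$, which is automatic since $A>0$ a.s.\ and hence this ratio is a.s.\ strictly positive and bounded by $1$.
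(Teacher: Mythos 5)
Your first step is sound: conditionally on the walk up to time $T_m$, the weights $\bA_{X_{T_m}}$ and the environment in the subtree strictly below $X_{T_m}$ are indeed undisturbed, so the annealed probability that the walk steps immediately from $X_{T_m}$ to a prescribed child and then never returns to $X_{T_m}$ equals a constant $c_0=\E[\omega(\r,\r 1)]\cdot\bP^{\r 1}(T_{\r}=\infty)>0$, and this event does force level $m+1$ to be a cut level. The genuine gap is the iteration. Your event $S_m$ (``immediate escape at level $m$'') is not measurable with respect to the $\sigma$-field generated by the walk up to $T_{m'}$ for \emph{any} $m'$: certifying that $S_m$ failed requires observing the entire future of the walk, hence probing the environment arbitrarily deep below $X_{T_m}$ --- which is precisely the environment that must remain fresh when you invoke the bound at level $m+1$. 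Concretely, one way for $S_m$ to fail is that the walk enters the subtree of $X_{T_{m+1}}$, wanders deep inside it, and backtracks to $X_{T_m}$; conditioning on this biases the subtree of $X_{T_{m+1}}$ toward recurrence, and no uniform lower bound on $\bP(S_{m+1}\giv S_1^c\cap\dots\cap S_m^c)$ follows from the strong Markov property. The assertion that ``the lower bound $c_0$ applies independently at each fresh level'' is exactly the content of the theorem, not a consequence of the setup; absent uniform ellipticity it cannot be waved away, and if the naive geometric domination worked the result would be immediate, whereas the paper flags the exponential tail as new and devotes Sections 3--4 to it.

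The paper's proof is built precisely to repair this. It (i) couples $\X$ with a family of \emph{extensions} $\X^{(\Gcal')}$ driven by independent exponential clocks, so that processes attached to edge-disjoint subtrees are independent; (ii) encodes escape over blocks of $n^*$ levels as a supercritical Galton--Watson tree of ``green'' vertices whose survival forces a cut level; and (iii), crucially, \emph{prunes} that tree at the first-children sitting at distances that are multiples of $\xi n^*$, so that the survival event $E(X_{T_{k\xi n^*}})$ does not look into the subtree rooted at $X_{T_{(k+1)\xi n^*}}$. Only after this pruning are the events $D_k=E(X_{T_{k\xi n^*}})$ genuinely independent, and one must check that supercriticality survives the pruning, which uses the quantitative divergence $b^{n}\bP(T^{\ssup{\nu}}_{-1}>T^{\ssup{\nu}}_{n})\to\infty$ rather than mere transience. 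To complete your argument you would need some localization and decoupling device of this kind; the one-level-at-a-time conditioning as written does not close.
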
 
Afterwards, we  prove that under the assumption \eqref{na1} we have 
\begin{equation}\label{sec-cond-na}
 \bE[(\tau_2 - \tau_1)^2]<\infty.
 \end{equation}
Set 
$Y_i \Def \ell_i -\ell_{i-1} - v_b(\tau_i - \tau_{i-1}).$
We have, for $ \tau_m \le n < \tau_{m+1}$,  
 \begin{equation}\label{na1.1}
  \frac{|X_n| - n v_b}{\sqrt n}  \ge \frac{\ell_m - \tau_{m+1} v_b}{\sqrt{\tau_I}},
 \end{equation}                  
where $I$ equals $m+1$ if the numerator $\ell_m - \tau_{m+1} v_b \ge 0$ and $m$  otherwise.
Hence, 
 \begin{equation}\label{na1.11}
  \frac{|X_n| - n v_b}{\sqrt n}  \ge \sqrt{\frac{m}{\tau_I} }\left(\frac{1}{\sqrt{m}} \sum_{i=1}^m Y_i -  v_b \frac{{\tau_{m}}- \tau_{m+1}}{\sqrt{m}}\right).
 \end{equation}    
 As $\tau_m = \sum_{i=1}^m (\tau_i - \tau_{i-1})$, in virtue of the strong  law of large numbers, $\tau_I/m$ converges a.s. to a positive finite constant. Moreover,  \eqref{sec-cond-na} guarantees that 
 $ \sum_{i=1}^m Y_i/(\sqrt{m})$ weakly converges to  a normal(0, $\sigma$),
 for some finite constant $\sigma>0$,  and  $(\tau_m- \tau_{m+1})/\sqrt{m}$ converges in probability to $0$. Hence, assuming  \eqref{sec-cond-na},  and using Slutzky Lemma, the right hand side of \eqref{na1.11} converges weakly to  a $\mbox{normal}(0,K)$ for some $K \in  (0, \infty)$.
 Similarly  
  \begin{equation}\label{na1.12}
  \frac{|X_n| - n v_b}{\sqrt n}  \le \sqrt{\frac{m+1}{\tau_J} }\left(\frac{1}{\sqrt{{m+1}}} \sum_{i=1}^{{m+1}} Y_i +  v_b \frac{\tau_m- \tau_{m+1}}{\sqrt{{m+1}}}\right),
 \end{equation}    
  where $J$ equals $m$ if ${\ell_{m+1} - \tau_m v_b} \ge 0$ and $m+1$  otherwise. The right-hand side of \eqref{na1.12} converges to a $\mbox{normal}(0, \sigma)$. The procedure to step from the ordinary  central limit theorem to the functional one is classical, and we refer to section 4 of  \cite{DKL02}.

\section{Extension Processes}\label{sec:ext}
 
Here, we define a construction that is closely related to the ones  introduced in \cite{Coll09} and \cite{CKS}. This construction allows to decouple the behaviour of the process on subtrees, even when the process is transient.  This will allow us to build a family of coupled processes which are independent when defined on disjoint subsets of the tree, and usefully correlated to $\mathbf{X}$.

Let $(\Omega, \mathcal{F},\bP)$ denote a probability space on which
\begin{align}\label{defY}
{\bf Y}=(Y(\nu,\mu,k): (\nu,\mu)\in V^2, \mbox{with }\nu \sim \mu, \textrm{ and }{k \in \Z_+})
\end{align}
is a family of independent  exponential random variables with mean 1, and where $(\nu,\mu)$ denotes an {\it ordered} pair of vertices. Below, we use these collections of random variables to generate the steps of $\X$. Moreover,   we  define  a {\it family} of coupled walks using the same collection  of \lq clocks\rq\  $ {\bf Y}$.

Define, for  any $\nu,\mu\in V$ with $\nu\sim \mu$, the quantities
\begin{align} \label{wj1}
r(\nu,\mu)&=  \1_{\{\mu = \parent\nu\}}+\sum_{i=1}^b A_{\nu i} \1_{\{\mu = \nu i\}}.
\end{align}

As it was done in \cite{CKS}, we are now going to define a family of coupled processes on the subtrees of $\mathcal{G}$. For any rooted subtree {$\mathcal{G}'=(V',E')$} of $\mathcal{G}$,
{
the root $\r'$ of $\mathcal{G}'$ is defined as the vertex of $V'$ with smallest distance to $\r$.
Let us define}
the {\it extension} $\X^{ (\mathcal{G}')}$  on $\mathcal{G}'$ as follows.
{Set $X^{ (\mathcal{G}')}_0=\r'$.} For {$\nu \in V'$,} a collection of nonnegative integers $\bar{k}=(k_\mu)_{\mu: [\nu,\mu]\in E'} $, {and $n\ge0$}, let 
\[
\mathcal{A}^{ (\mathcal{G}')}_{\bar{k},n,\nu}=\{X^{ (\mathcal{G}')}_n = \nu\}\cap\bigcap_{\mu: [\nu,\mu]\in E'} \{\#\{1\le j \le n \colon (X^{ (\mathcal{G}')}_{j-1},X^{ (\mathcal{G}')}_j) = (\nu,\mu)\} = k_\mu\}.
\]
Note that the event $\mathcal{A}^{ (\mathcal{G}')}_{\bar{k},n,\nu}$ deals with jumps along oriented edges.
{For} $\nu$, $\nu'$ such that $[\nu, \nu']\in E'$ and for $n\ge0$, on the event 
\begin{align}\label{ursula}
\mathcal{A}^{ (\mathcal{G}')}_{\bar{k},n,\nu}\cap \left\{\nu' = \argmin_{\mu: [\nu,\mu]\in E'}\Big\{\sum_{k=0}^{k_{\mu}}\frac{Y(\nu, \mu, k)}{r(\nu, \mu)} \Big\}\right\}, 
\end{align}
 we set $X^{ (\mathcal{G}')}_{n+1} = \nu'$, where the function $r$ is defined in \eqref{wj1} and the clocks $Y$'s are from the same collection ${\bf Y}$ fixed in \eqref{defY}.
 
We define $\X=\X^{(\mathcal{G})}$ to be the extension on the whole tree.
It is easy to check, from memoryless property  of exponential random variables, that this provides a construction of the RWRE~$\X$ on $\mathcal{G}$.
This continuous-time embedding is classical and it is inspired by {\it Rubin's construction}, after Herman Rubin (see the Appendix in Davis \cite{Dav90}).
If we consider proper subtrees $\mathcal{G}'$ of $\mathcal{G}$, one can check that, with these definitions, the steps of $\X$ on the subtree $\mathcal{G}'$ are given by the steps of $\X^{ (\mathcal{G}')}$. 
Notice that   for any two subtrees $\mathcal{G}'$ and $\mathcal{G}''$ whose edge sets are disjoint, the extensions $\X^{ (\mathcal{G}')}$ and $\X^{ (\mathcal{G}'')}$ are independent as they are defined by two disjoint sub-collections of ${\bf Y}$.

\begin{definition} For  any vertex $\nu \in V$,   define $\rm{fc}(\nu)$, called the first-child of $\nu$, as  the a.s. unique minimizer of {$ Y(\nu, \nu i, 0)/r(\nu, \nu i)$} over the the collection of offspring $(\nu i)_i$ of $\nu$.  For definiteness, the root $\r$ and its parent $\parent\r$ are not  first children. 
\end{definition}
Notice  that a first child is not  necessarily  visited by the process $\bX$. If
the latter 
visits  $\rm{fc}(\nu)$, then it is the first among the children of $\nu$
to be
visited. The random vertices $X_{T_n}$, for $n \ge 1$, are all first children.
\section{Proof of Theorem~\ref{Expotail}}\label{cutlev}

For any vertex $\nu$, with $\nu \neq  \parent\r$, denote by $\L_\nu$ the tree composed by $\nu$, $\parent\nu$, the descendants of $\nu$ and the edges connecting them. This tree is isomorphic to the original tree $\Gcal$. Consider the extension $\X^{\Lambda_\nu}$.  Set 
$$T^{\ssup \nu}_i \Def \inf\{{n > 0} \colon  |X^{\Lambda_\nu}_n| - |\nu| = i\},$$
i.e.  the hitting times of this extensions to level that has distance $i$  from $|\nu|$.
Using condition \eqref{na0}, combined with arguments from  \cite{CollBarb}, for all large $n$  we have 
 \begin{equation}\label{naup1}
 b^{n}\mathbf{P}(T^{\ssup \nu}_{-1} > T^{\ssup \nu}_{n}) >1.
 \end{equation}
{In fact under condition \eqref{na0}, it is proved that 
\begin{equation}\label{naup1.1}
 \lim_{n \ti}  b^{n}\mathbf{P}(T^{\ssup \nu}_{-1} > T^{\ssup \nu}_{n})  =\infty.
 \end{equation}
 }
 (See proof of Theorem 2.1 in \cite{CollBarb}).
Fix $n^*\in \N$ which satisfies \eqref{naup1}. We now construct a branching process as follows. 
We color green the  vertices $\nu$ at level $ n^{*}$ which are visited before ${\bX}$ {returns to $\parent{\r}$}. A vertex~$\nu$ at level~$j n^{*}$, for some integer $j \ge 2$, is colored green, if {\bf both} 
\begin{itemize}
\item its
ancestor~$\mu$ at level $(j-1) n^{*}$ is green,  {\bf and} 
\item the extension over the path $[\parent{\mu}, \nu]$, hits $\nu$ before returning to $\parent{\mu}$.
\end{itemize}
The green vertices evolve as a Galton--Watson tree, with offspring mean
$b^{n^{*}}\mathbf{P}(T^{\ssup \nu}_{-1} > T^{\ssup \nu}_{n^{*}}) > 1$.  Hence this random tree is supercritical, and thus
the probability of there being an infinite number of green vertices is positive.  

Denote by ${\rm GR}(\L_{\nu})$ the set of green vertices on the tree $\L_{\nu}$.
Next, we want to define a sequence of events which we show to be independent and which are closely related
to the event that a given level is a cut level. Fix {$\xi \in \N$}. For any vertex
$\nu \in V$, define the random subset of vertices $\Theta_\nu \subset V$ as follows. Vertex $\mu \in \Theta_\nu$, iff
\begin{itemize}
\item $\mu$  is a descendant of $\nu$;
\item  the distance between $\mu$ and $\nu$ is a multiple of $\xi n^*$;
\item $\mu$ is a first child.
\end{itemize}
\begin{definition} Define 
$\Grm(\L_{\nu})$ to be the set of green vertices obtained from $\Gr(\L_{\nu})$ by deleting elements of $\Theta_{\nu}$ {and their descendants.}
Define the event 
$$ E(\nu) \Def \{|\Grm(\L_{\nu})| = \infty\}.$$
\end{definition}
\begin{proposition} For any $\xi$ large enough, $\bP\big(E(\nu)\big)>0$. 
\end{proposition}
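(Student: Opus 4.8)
The plan is to deduce the statement from the survival of an auxiliary supercritical Galton--Watson process obtained by renormalising the green tree. Recall that, by the construction preceding the proposition together with \eqref{naup1}, the green vertices of $\L_\nu$ form a supercritical Galton--Watson tree with offspring mean $m:=b^{n^*}\mathbf{P}(T^{\ssup\nu}_{-1}>T^{\ssup\nu}_{n^*})>1$, so that the expected number of green descendants at level $|\nu|+jn^*$ of $\nu$ equals $m^{j}$; we may and do take $n^*\ge2$. I would introduce the renormalised process $(Z_k)_{k\ge0}$ with $Z_0=1$ (the generation-$0$ individual being $\nu$) and, for $k\ge1$, with $k$-th generation the set of vertices of $\Grm(\L_\nu)$ at level $|\nu|+k\xi n^*$; since each such vertex has a unique $\Grm$-ancestor at the previous special level, this is a well-defined genealogy. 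Because extensions built from disjoint edge sets are independent and the environment is i.i.d., $(Z_k)$ is a Galton--Watson process; and since $\{|\Grm(\L_\nu)|=\infty\}$ contains the survival event of $(Z_k)$, it suffices to prove that the offspring mean $\mathbf{E}[Z_1]$ exceeds $1$ once $\xi$ is large.

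A vertex of $\Grm(\L_\nu)$ at level $L:=|\nu|+\xi n^*$ is exactly a green descendant $\rho$ of $\nu$ at level $L$ which is \emph{not} a first child (no special level lies strictly between consecutive renormalised generations, so the intermediate green ancestors, even if first children, are not deleted), so $\mathbf{E}[Z_1]=\mathbf{E}\big[\#\{\rho:\rho\text{ a green descendant of }\nu\text{ at level }L,\ \rho\ne\mathrm{fc}(\parent\rho)\}\big]$. To bound this below I would retain only a ``clean'' sub-collection. Fix $\rho$ at level $L$; let $\sigma$ be its green ancestor at level $L-n^*$ and $\theta=\parent\rho$, and let $D'(\rho)$ be the event that $\sigma$ is a green descendant of $\nu$, that the line extension on the path $[\parent\sigma,\theta]$ reaches $\theta$ before returning to $\parent\sigma$ (call this $R_\theta$), and that at its first visit to $\theta$ this extension steps directly onto $\rho$, i.e.\ $Y(\theta,\rho,0)/A_\rho<Y(\theta,\parent\theta,0)$; then $D'(\rho)$ forces $\rho$ to be a green descendant of $\nu$. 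The crucial point is that, since $n^*\ge2$, the events $\{\sigma\text{ a green descendant of }\nu\}$ and $R_\theta$ are measurable with respect to the clocks $Y(u,\cdot,\cdot)$ and weights $\mathbf{A}_u$ attached to the \emph{strict ancestors} $u$ of $\theta$, whereas the ``direct step'' event and the event $\{\rho=\mathrm{fc}(\theta)\}$ are measurable with respect to the clocks $Y(\theta,\cdot,\cdot)$ and the weights $\mathbf{A}_\theta$; these two sub-collections of the independent data are disjoint, so the two groups of events are independent. Therefore
\[
\mathbf{E}[Z_1]\ \ge\ \sum_{\rho}\mathbf{P}\big(D'(\rho)\cap\{\rho\ne\mathrm{fc}(\theta)\}\big)\ =\ (c_1-c_2)\sum_{\rho}\mathbf{P}\big(\{\sigma\text{ a green descendant of }\nu\}\cap R_{\theta}\big),
\]
where $c_1=\mathbf{E}[A/(1+A)]\in(0,1)$ is the probability of the direct step and $c_2<c_1$ is the probability of the direct step together with $\rho=\mathrm{fc}(\theta)$; the inequality is strict because, with positive probability, the extension steps directly onto $\rho$ while some sibling of $\rho$ has a strictly smaller initial clock ratio — here $b\ge2$ is used. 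Grouping the sum by $\sigma$, and using $\sum_\sigma\mathbf{P}(\sigma\text{ a green descendant of }\nu)=m^{\xi-1}$ together with $\sum_{\rho\text{ below }\sigma}\mathbf{P}(R_{\parent\rho})=b^{n^*}\mathbf{P}(R)>0$ (a gambler's-ruin probability on a path of length $n^*$, depending only on $n^*$ and the law of the environment), one gets $\mathbf{E}[Z_1]\ge C\,m^{\xi-1}$ for a constant $C=C(n^*)>0$, which tends to $\infty$ with $\xi$ since $m>1$. Choosing $\xi$ so large that $C m^{\xi-1}>1$ makes $(Z_k)$ supercritical, so $\mathbf{P}(E(\nu))>0$.

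I expect the main obstacle to be the decoupling bookkeeping: one must pin down precisely which clocks $Y(\cdot,\cdot,\cdot)$ and which weights each of the events ``$\sigma$ a green descendant of $\nu$'', $R_\theta$, ``direct step at $\theta$'' and $\{\rho=\mathrm{fc}(\theta)\}$ depends on — which is exactly what the embedding of the walk into the clock field $\mathbf{Y}$ from Section~\ref{sec:ext} makes transparent, but which requires care. A minor complication is that consecutive line extensions share the single edge $[\parent\sigma,\sigma]$, so $\{\sigma\text{ a green descendant of }\nu\}$ and $R_\theta$ are not exactly independent of each other; this costs only a fixed multiplicative constant (e.g.\ by conditioning on the clocks of that one edge, or by an FKG-type inequality) and is absorbed into $C$. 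The other routine point is to check that $(Z_k)$ is genuinely Galton--Watson — that the $\Grm$-offspring of distinct surviving green vertices at a common special level are i.i.d.\ — which again follows from the independence of extensions on disjoint subtrees, up to the same one-edge overlap handled in the standard way.
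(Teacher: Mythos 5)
Your proposal is correct and follows essentially the same route as the paper: both prune the supercritical green Galton--Watson tree at the levels that are multiples of $\xi n^*$ by discarding first children, and show that the resulting renormalised branching process of $\Grm$-vertices is still supercritical once $\xi$ is large, because the green population at depth $\xi n^*$ grows unboundedly in $\xi$ while the pruning costs only a factor bounded away from $0$ uniformly in $\xi$. The paper's version is terser --- it bounds the offspring mean below by $b^{\xi n^*-1}(b-1)\mathbf{P}(T^{\ssup\nu}_{-1}>T^{\ssup\nu}_{\xi n^*})$ and invokes \eqref{naup1.1} --- whereas your last-block decoupling yielding $C\,m^{\xi-1}$ reaches the same conclusion with a more explicit accounting of which clocks and weights each event depends on.
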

\begin{proof}
{As we observed, the green vertices evolve as a supercritical Galton--Watson tree.  The event $E(\nu)$ is the survival event for a certain subtree of the Galton-Watson tree of green vertices, obtained by pruning. Choose $\xi$ large enough that 
 \begin{equation}\label{brownsup}
  b^{\xi n^* -1}  (b-1) \mathbf{P}(T^{\ssup \nu}_{-1} > T^{\ssup \nu}_{\xi n^*}) >1.
  \end{equation}
 This is possible because of \eqref{naup1.1}. 
  Color brown the descendants of $\nu$ { at} distance $\xi n^*$ from $\nu$ which are green { vertices} and are not first children. Recursively, color brown the descendants of $\nu$ at distance $k \xi n^*$, 
  \begin{itemize}
  \item { which are green vertices and not first children, and}
  \item whose ancestors { at level $ (k-1) \xi n^*$ from $\nu$} are brown.
 \end{itemize}
 The random set of vertices of brown vertices evolve like the population of a branching process with mean offspring larger than 
 $$ b^{\xi n^* -1}  (b-1) \mathbf{P}(T^{\ssup \nu}_{-1} > T^{\ssup \nu}_{\xi n^*}).$$
Hence  it is supercritical  by virtue of \eqref{brownsup}, which in turn implies that $\bP\big(E(\nu)\big)~>~0$. 
}
\hfill 
\end{proof}
\begin{proposition}
The events {$D_k \Def E(X_{T_{k\xi n^*}})$}, with $k \in \N$, are independent. 
\end{proposition}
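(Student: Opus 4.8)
The plan is to expose the clock family $\mathbf{Y}$ and the weights $\mathbf{A}$ progressively, climbing the tree level by level along $\mathbf{X}$, and to read off $D_k$ at the milestone level $k\xi n^*$ from a block of data not yet consulted. Two ingredients make this work: a locality property of $E(\nu)$, and the fact that the pruning built into $\Grm$ deletes exactly the part of $\L_\nu$ that $\mathbf{X}$ revisits on its way to the next milestone. For the first, I would check that $E(\nu)$ is measurable with respect to the sub-collection $\mathcal{D}_\nu$ of $(\mathbf{Y},\mathbf{A})$ attached to $\L_\nu$ once the subtrees rooted at the first children lying at distance a positive multiple of $\xi n^*$ below $\nu$ are deleted: indeed $\Gr(\L_\nu)$ is built from the extension $\mathbf{X}^{\L_\nu}$ and the path-extensions, which consult only clocks and weights inside $\L_\nu$, and $\Grm(\L_\nu)$, obtained by discarding the vertices of $\Theta_\nu$ together with their descendants, depends neither on the data strictly below any such first child $w$, nor---since being a first child is decided at $\parent w$ and not along the edge $\{\parent w,w\}$---on that edge itself. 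In particular $\mathcal{D}_\nu$ does not meet $\L_w$ for any such $w$.

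Next, write $u_k=X_{T_{k\xi n^*}}$; recall from Section~\ref{sec:ext} that each $u_k$ is a first child. Fix $j<k$. If $u_k$ is a descendant of $u_j$ it lies at distance $(k-j)\xi n^*$ below $u_j$ and is a first child, hence $u_k\in\Theta_{u_j}$, so the previous paragraph gives $\mathcal{D}_{u_j}\cap\L_{u_k}=\varnothing$ and thus $\mathcal{D}_{u_j}\cap\mathcal{D}_{u_k}=\varnothing$; if $u_k$ is not a descendant of $u_j$ then, as its level exceeds that of $u_j$ it is not an ancestor of $u_j$ either, so the two are incomparable, $\L_{u_j}$ and $\L_{u_k}$ are edge-disjoint, and again $\mathcal{D}_{u_j}\cap\mathcal{D}_{u_k}=\varnothing$. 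Therefore, for any prescribed admissible values of the vertices $u_1,u_2,\dots$, the events $D_k$ become measurable with respect to pairwise disjoint sub-collections of the i.i.d.\ family $(\mathbf{Y},\mathbf{A})$.

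To turn disjointness into independence, fix indices $k_1<\cdots<k_r$ and run $\mathbf{X}$ up to time $T_{k_r\xi n^*}$; this identifies $u_1,\dots,u_{k_r}$ by consulting only clocks in the first $k_r\xi n^*$ levels, and those clocks either sit outside $\L_{u_1},\dots,\L_{u_{k_r-1}}$---hence outside $\mathcal{D}_{u_{k_1}},\dots,\mathcal{D}_{u_{k_r}}$---or sit inside some $\L_{u_j}$, where they govern the excursions of $\mathbf{X}$ below $u_j$ and, by the calibration of $\Theta$ above, belong to the subtrees that $\Grm(\L_{u_j})$ throws away; in either case they miss all the blocks $\mathcal{D}_{u_{k_i}}$. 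Consequently, conditionally on $u_1,\dots,u_{k_r}$, the events $D_{k_1},\dots,D_{k_r}$ are read from disjoint fresh blocks of $(\mathbf{Y},\mathbf{A})$; each has conditional probability $q:=\mathbf{P}(E(v))$, which does not depend on the first child $v$ because $\L_v$ is isomorphic to $\Gcal$ and the environment is i.i.d.; and they are conditionally independent. Averaging over $u_1,\dots,u_{k_r}$ yields $\mathbf{P}(D_{k_1}\cap\cdots\cap D_{k_r})=q^r=\prod_{i=1}^r\mathbf{P}(D_{k_i})$, and since this holds for every finite set of indices the $D_k$ are mutually independent.

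The delicate point, and the one I expect to be the main obstacle, is the assertion that the clocks consumed while $\mathbf{X}$ climbs from one milestone to the next lie outside all the blocks $\mathcal{D}_{u_k}$. This is exactly what the choice $\Theta_\nu=\{\text{first children at distances in }\xi n^*\N\}$ is calibrated for, together with the fact that each $X_{T_n}$ is a first child, so that the frontier of $\mathbf{X}$ runs through the vertices that $\Grm$ discards rather than those it retains; making it rigorous requires unwinding the construction of the extension processes of Section~\ref{sec:ext} and keeping track of which triples $(a,b,k)$ are consulted at each stage.
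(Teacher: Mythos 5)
Your central mechanism is the right one and is the same as the paper's: the pruning set $\Theta_\nu$ is calibrated precisely so that, for $j<k$, the milestone $u_k=X_{T_{k\xi n^*}}$ is either incomparable with $u_j$ or is a first child at distance $(k-j)\xi n^*$ below it, hence lies in $\Theta_{u_j}$; in both cases the data block $\mathcal{D}_{u_j}$ determining $E(u_j)$ avoids $\L_{u_k}$. That pairwise disjointness is the heart of the proposition and you have it. The gap is in how you convert it into independence. You condition on \emph{all} the milestones $u_1,\dots,u_{k_r}$ at once and assert that each $D_{k_i}$ then has conditional probability $q$ and that they are conditionally independent, justifying this by the claim that every clock consumed by $\X$ inside $\L_{u_j}$ before $T_{k_r\xi n^*}$ ``belongs to the subtrees that $\Grm(\L_{u_j})$ throws away.'' That claim is false: $\Theta_{u_j}$ contains only first children at \emph{positive} multiples of $\xi n^*$ below $u_j$, so $u_j$ itself and all of its descendants at distances $1,\dots,\xi n^*-1$ are retained in $\Grm(\L_{u_j})$, and the walk consumes clocks $Y(u_j,\cdot,\cdot)$ and clocks at those intermediate descendants on its way to the next milestone. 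These clocks lie in $\mathcal{D}_{u_j}$ and simultaneously influence both $E(u_j)$ and the location of the later milestones, so for $i<r$ the conditional probability $\bP\bigl(D_{k_i}\mid u_1,\dots,u_{k_r}\bigr)$ need not equal $q$, and your averaging step $\bP(\cap_i D_{k_i})=q^r$ does not follow as written. You yourself flag this as the expected obstacle; it is a real one, not merely a matter of bookkeeping.

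The repair is the paper's inductive peeling, which uses the disjointness asymmetrically. Condition only on the position $\nu$ of the \emph{last} milestone $u_{k_r}$. The block $\mathcal{D}_\nu$ (clocks with first coordinate $\nu$ or a descendant of $\nu$, and the weights strictly below $\nu$) is untouched both by the walk up to time $T_{k_r\xi n^*}$ (the walk only consults clocks whose first coordinate sits at level $<k_r\xi n^*$) and, thanks to the pruning, by the events $D_{k_1},\dots,D_{k_{r-1}}$. Hence $E(\nu)$ is independent of $\{u_{k_r}=\nu\}\cap\bigcap_{i<r}D_{k_i}$, and by homogeneity $\bP(E(\nu))=\bP(D_{k_r})$, which gives
\begin{equation*}
\bP\Bigl(\bigcap_{i=1}^{r}D_{k_i}\Bigr)=\bP\Bigl(\bigcap_{i=1}^{r-1}D_{k_i}\Bigr)\,\bP(D_{k_r}),
\end{equation*}
and one iterates. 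No claim about the conditional law of the earlier $D_{k_i}$ given the later milestones is needed, which is fortunate because that claim is not true.
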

\begin{proof}
Fix indices $i_1< i_2 < \ldots< i_n$.  It is enough to prove that  
\begin{equation}\label{na:dk} \bP\left(\bigcap_{k=1}^n D_{i_k}\right)  = \bP\left(\bigcap_{k=1}^{n-1} D_{i_k} \right) \bP(D_{i_n}).
\end{equation}
To prove \eqref{na:dk}, we condition on the possible values of {$X_{T_{i_n \xi n^*}}$}. To simplify notation, set $\iota_n = i_n \xi n^*$. 
$$
\begin{aligned}
\bP\left(\bigcap_{k=1}^n D_{i_k}\right)= \sum_{\nu \in V\colon |\nu| = \iota_n} \bP\left(\bigcap_{k=1}^n D_{i_k} \;|\; {X_{T_{\iota_n}} = \nu}\right) \bP\left({X_{T_{\iota_n}} = \nu}\right). 
\end{aligned}
$$
Conditionally on $\{{X_{T_{\iota_n}} = \nu}\}$, $D_{i_n}$ is determined by the collection of exponentials {$Y(x, y, k)$} where both $x, y$ are vertices of $\L_\nu$ and {$k \in \N$}. On the other hand, conditionally on  $\{{X_{T_{\iota_n}} = \nu}\}$, the event $\bigcap_{k=1}^{n-1} D_{i_k}$ depends on a disjoint set of exponentials. Hence $D_{i_n}$ and $\bigcap_{k=1}^{n-1} D_{i_k}$  are, given  $\{{X_{T_{\iota_n}} = \nu}\}$, conditionally  independent, i.e. 
$$ \bP\left(\bigcap_{k=1}^n D_{i_k}\;|\; {X_{T_{\iota_n}} = \nu}\right) = \bP\left(\bigcap_{k=1}^{n-1} D_{i_k}\;|\; {X_{T_{\iota_n}} = \nu}\right) \bP(D_{i_n}\;|\;{X_{T_{\iota_n}} = \nu}).$$
Finally, notice that by a simple symmetry argument, we have
$$ \bP(D_{i_n}\;|\;{X_{T_{\iota_n}} = \nu}) = \bP(D_{i_n}).$$ 
\hfill 
\end{proof}

\begin{proof}[Proof of Theorem~\ref{Expotail}]
First, notice that if {$D_i$} holds, then {$i \xi n^*$} is a cut level, as after time {$T_{i \xi n^*}$}, the process $\bX$ never visits level {$i \xi n^*-1$} again. 
Hence, 
$$ \bP(\ell_{1} \ge n) \le {\bP\left(\bigcap_{i=1}^{\floor{n/(\xi n^*)}} D^c_i\right) \le  \bP\Big( D^c_1\Big)^{\floor{n/(\xi n^*)}},}$$
proving the theorem.
\hfill
\end{proof}

\section{Finite second moment between cut times}

Our plan is to prove, in order, that 
\begin{itemize}
\item the $q$-th moment of the number of distinct vertices visited by time $\tau_1$ grows as a power function, with degree $q$, for all $q>1$.
\item The $p$-th moment of the total number of visits to $\r$ by the process $\bX$ is finite. 
\end{itemize}
Let $\Pi_n$ be the cardinality of the range, that is i.e. the number of distinct vertices,  of $\X$ by time $T_n$. The number of vertices visited at level $i$ is bounded {by $Y_i$}, where $(Y_i)_i$ is a sequence of \iid $\;$geometric random variables. 
We recall that the process is transient.  Hence,  each  time it jumps to an unvisited vertex $\nu$  there is a fixed positive  probability  that the process never visits again $\parent{\nu}$. Hence for any $q >1$,
$$ \bE[\Pi_n^q] \le  \bE\left[\left(\sum_{i=1}^{n} Y_i\right)^q\right] \le n^q   \bE[Y_1^q].$$
In other words,  for any $q >1$, we have
\begin{equation}\label{naup2} 
 \bE[\Pi_n^q] = O(n^q).
\end{equation}
Define $\Pi = \Pi_{\ell_1}$, that is the number of different vertices visited by the time the process hits the first cut level.
\begin{lemma}For any $q >1$, we have that 
\begin{equation}\label{naups2}
 \bE[\Pi^q] <\infty.
 \end{equation}
\end{lemma}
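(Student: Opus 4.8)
The plan is to combine the power-law bound \eqref{naup2} on the moments of $\Pi_n$ with the exponential tail for $\ell_1$ from Theorem~\ref{Expotail}, using the observation that $\ell_1$ under $\bP$ has a tail comparable to that of $\ell_2-\ell_1$ under $\bP(\cdot \giv T(\parent\r)=\infty)$. The starting point is the decomposition
\begin{equation}\label{plan:decomp}
\bE[\Pi^q] = \bE\big[\Pi_{\ell_1}^q\big] = \sum_{n\ge 1} \bE\big[\Pi_{n}^q \,;\, \ell_1 = n\big] \le \sum_{n\ge 1} \bE\big[\Pi_{n}^q \,;\, \ell_1 \ge n\big],
\end{equation}
where I use the monotonicity $\Pi_{\ell_1} \le \Pi_n$ on $\{\ell_1 = n\}$ (the range only grows in time, and $T_{\ell_1} \le T_n$ when $\ell_1 \le n$; more carefully one uses $\Pi_{\ell_1}\le\Pi_n$ on $\{\ell_1\le n\}$ and sums the increments). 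The key point is then that $\Pi_n$ (a measurable function of the walk up to time $T_n$) and $\{\ell_1 \ge n\}$ are \emph{not} independent, so I cannot simply multiply $\bE[\Pi_n^q]$ by $\bP(\ell_1\ge n)$.

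The way around this is a Cauchy--Schwarz (or H\"older) split: for each $n$,
\begin{equation}\label{plan:cs}
\bE\big[\Pi_{n}^q \,;\, \ell_1 \ge n\big] \le \bE\big[\Pi_{n}^{2q}\big]^{1/2} \, \bP(\ell_1 \ge n)^{1/2} \le C\, n^{q} \, a^{n/2}\cdot(\text{const}),
\end{equation}
where the first factor is $O(n^q)$ by \eqref{naup2} applied with exponent $2q>1$, and the second factor decays geometrically by Theorem~\ref{Expotail} — here one needs the elementary fact that $\bP(\ell_1 \ge n) \le C' \bP(\ell_2 - \ell_1 \ge n) \le C' a^n$, which follows because, removing the first regeneration block, $\ell_1$ is stochastically dominated by $\ell_1 + (\ell_2-\ell_1)$ and the renewal structure described after Definition~\ref{defregene} gives a comparison of $\ell_1$ with a geometric number of i.i.d. copies of $\ell_2-\ell_1$ (alternatively, a direct argument: if $\ell_1$ is large then many consecutive would-be regeneration levels fail, and these failures have the same geometric tail as in the proof of Theorem~\ref{Expotail}). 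Summing \eqref{plan:cs} over $n$ gives a convergent series $\sum_n C n^q a^{n/2} < \infty$, which yields \eqref{naups2}.

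\textbf{Main obstacle.} The routine part is the summation; the part requiring care is justifying the tail bound $\bP(\ell_1 \ge n) \le C' a^n$ for the \emph{first} cut level $\ell_1$ (as opposed to $\ell_2 - \ell_1$, for which Theorem~\ref{Expotail} is stated), and making the monotonicity step in \eqref{plan:decomp} rigorous given that $\ell_1$ is defined via a supremum/infimum over the whole trajectory rather than a stopping time. For the first issue, the cleanest route is to re-run the branching/coloring construction from Section~\ref{cutlev} verbatim: the events $D_i = E(X_{T_{i\xi n^*}})$ are independent with $\bP(D_i^c)<1$ and $D_i$ implies $i\xi n^*$ is a cut level, so $\bP(\ell_1 \ge n) \le \bP(D_1^c)^{\lfloor n/(\xi n^*)\rfloor}$ — this is exactly the bound already displayed in the proof of Theorem~\ref{Expotail}, so in fact \eqref{naups2} follows with $a = \bP(D_1^c)^{1/(\xi n^*)}$ without any further renewal comparison. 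For the second issue, one notes $\{\ell_1\ge n\}$ only depends on whether levels below $n$ are revisited, and $\Pi_n$ is monotone in $n$, so \eqref{plan:decomp} holds after telescoping; I would spell this out in one or two lines. The only genuinely substantive ingredient beyond bookkeeping is thus the already-proven input \eqref{naup2} together with the geometric tail, and the lemma reduces to the convergence of $\sum_n n^q a^{n/2}$.
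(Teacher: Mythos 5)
Your proposal is correct and follows essentially the same route as the paper: decompose $\bE[\Pi^q]$ over the value of $\ell_1$, apply Cauchy--Schwarz to separate $\bE[\Pi_n^{2q}]^{1/2}=O(n^q)$ from $\bP(\ell_1\ge n)^{1/2}$, and sum the resulting geometric-times-polynomial series. Your observation that the needed tail bound is for $\ell_1$ rather than $\ell_2-\ell_1$, and that it is exactly the displayed inequality $\bP(\ell_1\ge n)\le \bP(D_1^c)^{\floor{n/(\xi n^*)}}$ from the proof of Theorem~\ref{Expotail}, is precisely what the paper implicitly relies on.
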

\begin{proof}
Notice that 
$$ \bE[\Pi^q] = \sum_{n=0}^\infty \bE\left[\Pi_n^q \1_{\{\ell_1 =n\}}\right] \le C \sum_{n=0}^\infty n^q \bP(\ell_1 \ge n)^{1/2}<\infty,$$
where we used  Cauchy-Schwarz\rq{}s inequality, \eqref{naup2} and Theorem~\ref{Expotail}.
\hfill
\end{proof}

Define, for $ \nu \in V$,
$$
L_\nu  \Def {\sum_{k=0}^{\infty}} \1_{\{X_k = \nu\}},   \qquad  \beta_\nu(\omega) \Def \mathbf{P}^\nu_{\omega}(T_{\parent{\nu}} = \infty), 
$$
which respectively are,  the total time spent in $\nu$ and the quenched probability that the walk never returns to $\parent{\nu}$.
\begin{remark}\label{remna1} Under the measure $\bP_\omega$, the random variable $L_{\parent\r}$ is distributed as Geometric($\beta_\r (\omega)$), i.e.
$$ {\bP_\omega}(L_{\parent\r} = k) = \Big(1- \beta_\r (\omega)\Big)^k \beta_\r (\omega) \qquad \mbox{for $ k \ge 0$}.$$
\end{remark}
\begin{proposition}\label{prop:ku1}
Under the assumptions of Theorem~\ref{mainth} we have that 
$$ \E\left[\Big(\beta_\r\Big)^{-p}\right] < \infty,$$
where $p>2$ satisfies \eqref{na1}.
\end{proposition}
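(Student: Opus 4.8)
The plan is to bound $\beta_\r$ from below in terms of quantities that are easy to control, and then transfer the moment assumption \eqref{na1} through that bound. Recall $\beta_\r(\omega) = \mathbf{P}^\r_\omega(T_{\parent{\r}} = \infty)$, the quenched escape probability. The first observation is that from the root, the walk steps to $\parent{\r}$ with quenched probability $\omega(\r,\parent\r) = \big(1 + \sum_{1\le i \le b} A_{\r i}\big)^{-1}$, and to a child $\r i$ with probability $\omega(\r,\r i) = A_{\r i}\big(1 + \sum_j A_{\r j}\big)^{-1}$. If instead we only ask that the walk, started from $\r$, escape to infinity without ever hitting $\parent\r$, we may lower-bound $\beta_\r$ by the probability of a single successful excursion: step to some child $\r i$ and then, from $\r i$, never come back to $\r$. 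Writing $\widehat\beta_{\r i}$ for the quenched probability that a walk started at $\r i$ never visits $\r$ (this is an escape probability on the subtree $\L_{\r i}$, which is isomorphic to $\Gcal$), we get
\begin{equation}\label{eq:betabound}
\beta_\r(\omega) \;\ge\; \sum_{1 \le i \le b} \omega(\r,\r i)\,\widehat\beta_{\r i}(\omega)\;=\;\frac{\sum_{1\le i\le b} A_{\r i}\,\widehat\beta_{\r i}}{1 + \sum_{1\le i\le b} A_{\r i}}.
\end{equation}

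Next I would note that under Assumption~A the walk is transient, so each $\widehat\beta_{\r i}$ is strictly positive a.s., and crucially the $\widehat\beta_{\r i}$, $1\le i\le b$, are i.i.d.\ (they depend on disjoint sub-collections of the environment) and each has the same law as $\beta_\r$ itself after a harmless relabelling — more precisely $\widehat\beta_{\r i}$ is the escape probability from the root of a copy of the full tree. A cleaner route is to bound \eqref{eq:betabound} below by keeping just the single largest term, or better, to use that $\sum_i A_{\r i}\widehat\beta_{\r i} \ge \min_i \widehat\beta_{\r i}\cdot \sum_i A_{\r i}$, giving
\[
\beta_\r \;\ge\; \Big(\min_{1\le i\le b}\widehat\beta_{\r i}\Big)\cdot \frac{\sum_i A_{\r i}}{1 + \sum_i A_{\r i}}\;\ge\;\frac12\,\Big(\min_{1\le i\le b}\widehat\beta_{\r i}\Big)\,\min\!\Big(1,\ \sum_i A_{\r i}\Big),
\]
using $\frac{s}{1+s}\ge \frac12\min(1,s)$. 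Therefore
\[
\E[\beta_\r^{-p}] \;\le\; 2^p\,\E\Big[\big(\min_i\widehat\beta_{\r i}\big)^{-p}\Big]\cdot\E\Big[\max\big(1,\ (\textstyle\sum_i A_{\r i})^{-1}\big)^{p}\Big],
\]
where the factorization uses independence of $\min_i\widehat\beta_{\r i}$ (a function of the environment strictly below level~1) from $\mathbf{A}_\r$. The second factor is finite exactly by \eqref{na1}. For the first factor, $\E[(\min_i\widehat\beta_{\r i})^{-p}] \le b\,\E[\beta_\r^{-p}]$ by a union bound over the $b$ children and the identical distribution of each $\widehat\beta_{\r i}$ with $\beta_\r$ — but this is circular, so instead I would set up a bootstrap: let $M_p := \E[\beta_\r^{-p}]$ and show $M_p \le 2^p b\, M_p\, C_p$ is \emph{not} directly useful; rather one argues $M_p<\infty$ by first establishing $\beta_\r>0$ a.s.\ and a tail bound $\P(\beta_\r < \epsilon)\le (\text{something})$ decaying fast enough, using the branching/Galton--Watson structure of green vertices from Section~\ref{cutlev} together with Theorem~\ref{Expotail}.

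The cleanest argument, and the one I would ultimately write, connects $\beta_\r$ to the regeneration structure directly: by Remark~\ref{remna1}, $L_{\parent\r}$ is Geometric$(\beta_\r)$ under $\bP_\omega$, so $\E[\beta_\r^{-p}]$ is comparable to $\bE[L_{\parent\r}^{\,p}]$ (since $\E[\mathrm{Geom}(q)^p]\asymp q^{-p}$ for small $q$), and $L_{\parent\r}\le \Pi$ is dominated by the range up to the first cut level, a.s.; wait — $L_{\parent\r}$ counts \emph{visits} to $\parent\r$, not range, so instead I would bound $L_{\parent\r}$ by the number of distinct level-$0$ descendants visited... Actually the right comparison is: on $\{T(\parent\r)=\infty\}$ we have $L_{\parent\r}=1$, and in general $L_{\parent\r}$ is the number of excursions from $\parent\r$, each returning with probability $1-\beta_\r$; these excursions before the first cut level are at most $\Pi_{\ell_1}$ in number... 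The main obstacle, and where I would spend the most care, is precisely this last step: showing $\bE[L_{\parent\r}^{\,p}]<\infty$ (equivalently $\E[\beta_\r^{-p}]<\infty$) by relating $L_{\parent\r}$ to $\Pi = \Pi_{\ell_1}$ — whose $q$-th moments are finite for all $q>1$ by the Lemma preceding this Proposition — via the estimate that the number of returns to $\parent\r$ before the walk settles above the first cut level is controlled (after a further application of Cauchy--Schwarz and Theorem~\ref{Expotail}) by $\Pi$ and by $(\tau_1)$, and then invoking \eqref{na1} one more time to handle the single geometric step at $\r$ itself. I expect the bookkeeping of which environment-coordinates are independent of which — so that the product bound above is legitimate and non-circular — to be the genuinely delicate point.
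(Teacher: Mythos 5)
Your proposal does not close. The starting point is fine --- your lower bound $\beta_\r \ge \sum_i \omega(\r,\r i)\widehat\beta_{\r i}$ is a weakening of the exact recursion $1/\beta_\r = 1 + \big(\sum_i A_{\r i}\beta_{\r i}\big)^{-1}$ that the paper derives in \eqref{na9.1} --- but the step where you replace $\sum_i A_{\r i}\widehat\beta_{\r i}$ by $\big(\min_i\widehat\beta_{\r i}\big)\sum_i A_{\r i}$ and then decouple is exactly where the argument dies, and you acknowledge as much: the resulting self-referential inequality $M_p\le 2^p b\, C\, M_p$ is vacuous when $M_p=\infty$ (which is precisely what you need to rule out), and its constant exceeds $1$ in any case. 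None of the escape routes you sketch afterwards is viable. The route through $L_{\parent\r}$ and Remark~\ref{remna1} is backwards: in the paper, $\E[\beta_\r^{-p}]<\infty$ is the \emph{input} used to prove $\bE[L_{\parent\r}^p]<\infty$, and since $\bE_\omega[L_{\parent\r}^p]$ is of order $\beta_\r(\omega)^{-p}$ when $\beta_\r$ is small, the two statements are essentially equivalent; you cannot get one for free from the other. Likewise, $\bE[\Pi^q]<\infty$ controls the number of \emph{distinct} vertices seen before the first cut level, not the number of returns to $\parent\r$, and the Galton--Watson construction of Section~\ref{cutlev} yields a tail bound on $\ell_1$, not on $\P(\beta_\r<\eps)$.

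The missing idea (following A\"id\'ekon's Lemma 2.2, which the paper adapts) is to iterate the recursion along a single \emph{greedy} ray $v_0=\r, v_1, v_2,\dots$, where $v_{k+1}$ maximizes $x\mapsto A_x$ over the children of $v_k$ (so that $A_{v_{k+1}}^{-1}\le b\big(\sum_i A_{v_k i}\big)^{-1}$ is controlled by \eqref{na1}), together with the stopping events $E_n=\bigcap_{k\le n}\bigcap_{y\in\Ccal(v_k)}\{A_y\beta_y<\eps\}$. Off $E_{n+1}$, some sibling of $v_{n+1}$ already gives $\min_y (A_y\beta_y)^{-1}\le 1/\eps$ and the iteration stops; on $E_{n+1}$ one descends one more level, paying a factor $A_{v_{n+1}}^{-1}$. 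This yields $\beta_\r^{-1}\le(1+1/\eps)\big(1+\sum_n \1_{E_n}\prod_{j\le n}A_{v_j}^{-1}\big)$, and the key point is that $\P(E_n)=\P(E_1)^n$ by the i.i.d.\ structure of the environment, so $\E\big[\1_{E_n}\prod_{j\le n} A_{v_j}^{-p}\big]=c^n$ with $c=\E[\1_{E_1}A_{v_1}^{-p}]$, which can be made smaller than $2^{-(p-1)}$ by taking $\eps$ small (dominated convergence, using \eqref{na1} and $\beta_y>0$ a.s.). This geometric decay then beats the weights $2^{(p-1)n}$ in the elementary inequality $\big(\sum_n b_n\big)^p\le\sum_n 2^{(p-1)n}b_n^p$, and the series converges. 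That contraction mechanism --- a small-probability event at each level compensating the accumulated factors $A_{v_j}^{-1}$ --- is what your union-bound-over-all-children approach cannot reproduce.
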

\begin{proof}
This proof is  inspired by the proof of Lemma 2.2 in \cite{Aid}.  We include the steps for completeness.  
\begin{equation}\label{na9}
 \beta_\nu(\omega) = \sum_{i=1}^b \omega(\nu, \nu i) \beta_{\nu i}(\omega) + \sum_{i=1}^b  \omega(\nu, \nu i) (1- \beta_{\nu i}(\omega)) \beta_\nu(\omega).
 \end{equation}
From \eqref{na9}, it follows that 
\begin{equation}\label{na9.1}
 \frac 1{\beta_\nu} = 1+ \frac1 {\sum_{i=1}^b A_{\nu i} \beta_{\nu i}} \le 1 + \min_{1 \le i \le b} \frac 1{A_{\nu i} \beta_{\nu i}}.
 \end{equation}
Consider a random path generated as follows. We set $v_0 = \r$, and we define $v_k$, with $k\ge 1$, recursively. Suppose that $v_j$ for $j \le k$  are defined. Set $v_{k+1}$ to be one of the maximizers $x \mapsto A_x$, where $x$ ranges over the offspring of $v_k$.  If there is more than one maximizer, we choose among them uniformly at random.  Define  $\Ccal(v_k)$ the set of offspring of $v_k$ different from  $v_{k+1}$. Fix $\eps >0$.
$$E_n \Def { \bigcap_{k=1}^{{n}}} \bigcap_{y \in \Ccal(v_k)} \Big\{ A_y \beta_y < \eps\Big\}.
$$
We set $E_0^c = \emptyset$. 
{Notice that  $E_{n+1}\subset E_n$ and that on the event $ E^c_{n+1}\cap E_n$ we have
$$ \min_{y \in  { \Ccal(v_n)} \cup \{v_{n+1}\}}  \frac {1 }{ A_y\beta_y(\omega)} \le \frac 1\eps.$$
Combining these two facts with  \eqref{na9.1},
  we infer the following. Denote  by $(y(i))_i$ the offspring of $v_n$,
\begin{equation}\label{na9.11}
\begin{aligned}
\frac{\1_{E_n}}{ \beta_{v_n}(\omega)} &
\le  1 + \min_{i \in \N} \frac {\1_{E_{n+1}^c} \1_{E_n} }{ A_{y(i)}\beta_{y(i)}(\omega)}  +\min_{i \in  \N} \frac {\1_{E_{n+1}}}{ A_{y(i)}\beta_{y(i)}(\omega)}\\
&\le  1 +  \frac 1 \eps\1_{E_{n+1}^c} \1_{E_n} + \min_{y \in  { \Ccal(v_n)}} \frac {\1_{E_{n+1}}}{ A_y \beta_y(\omega)}\\
&\le 1 +  \frac 1 \eps\1_{E_{n+1}^c} \1_{E_n} + \frac 1 \eps\1_{E_{n+1}} +  \frac{\1_{E_{n+1}}}{ A_{v_{n+1}} \beta_{v_{n+1}}(\omega)}\\
&\le 1+ \frac 1 \eps + \frac{\1_{E_{n+1}}}{ A_{v_{n+1}} \beta_{v_{n+1}}(\omega)}.
\end{aligned}
\end{equation}
Following  \cite{Aid} (proof of Lemma 2.2), by the i.i.d. structure of the environment, we have} $\P(E_n) = \P(E_1)^n$  . By reiterating \eqref{na9.11}, we obtain 
\begin{equation}\label{na2}
\frac{1}{\beta_\r} \le \Big(1 + \frac 1{\eps}\Big) \left(1 + \sum_{n=1}^\infty B(n)\right),
\end{equation}
where $B(n) = \1_{E_n} \prod_{j=1}^{n} \big(A_{v_j}\big)^{-1}$.   
Notice that for any sequence of non-negative numbers $(b_n)_n$,   we have 
\begin{equation}\label{na3}
{
\left( \sum_{n=1}^\infty b_n \right)^p \le \sum_{n=1}^\infty 2^{(p-1)n}  b_n^p.
}
\end{equation}
In order to prove \eqref{na3}, it is enough to notice that 
$$ {\sum_{n=1}^\infty b_n =  \sum_{n=1}^\infty 2^{-n} 2^n b_n,} $$
and apply  Jensen's inequality.  
Next, we combine \eqref{na2} and \eqref{na3},  to get 
\begin{equation}\label{na4}
\begin{aligned}
 \frac{1}{\beta_\r^p}  &\le \Big(1 + \frac 1{\eps}\Big)^p   \left(1 + \sum_{n=1}^\infty B(n)\right)^p\\
 &\le \Big(1 + \frac 1{\eps}\Big)^p 2^{p-1} \left(1 + \sum_{n=1}^\infty 2^{(p-1)n}  B(n)^p \right).
 \end{aligned}
\end{equation}
We have
$$\E[B(n)^p] = \E\left[\1_{E_1} \Big(A_{v_1}\Big)^{-p}\right]^n =: c^n.$$ 
In virtue of the definition of $E_1$ and the integrability condition \eqref{na1}, we  can choose $\eps$ small enough that $c < 1/2^{p-1}$. Hence, by taking expectations in \eqref{na4}, we get
$$ \E\left[\frac{1}{\beta_\r^p}\right] < \infty.$$
\hfill
\end{proof}

\begin{proposition} We have
\begin{equation}\label{na5}
\bE\left[L_\r^p\right] < \infty.
\end{equation}
\end{proposition}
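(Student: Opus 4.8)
The plan is to disintegrate over the random environment and use the elementary fact that, under the quenched law $\bP_\omega$, the number of visits $L_\r$ to the root is a geometric random variable whose parameter can be read off from the environment near $\r$. Write $q(\omega)\Def \bP_\omega^\r(\exists\,k\ge 1:\ X_k=\r)$ for the quenched return probability to $\r$. By transience, $q(\omega)<1$ for $\P$-a.e.\ $\omega$, and the strong Markov property gives $\bP_\omega(L_\r\ge m)=q(\omega)^{\,m-1}$ for every $m\ge 1$. Since a geometric random variable has moments of all orders with $\bE[L^{\,p}]\le C_p\,(1-\theta)^{-p}$ whenever $\bP(L\ge m)=\theta^{\,m-1}$ (for a constant $C_p<\infty$ depending only on $p$), we get $\bE_\omega[L_\r^{\,p}]\le C_p\,\bigl(1-q(\omega)\bigr)^{-p}$.

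Next I would compute $1-q(\omega)$ by a one-step decomposition at $\r$: from $\r$ the walk either moves to $\parent\r$ and returns at the next step, or moves to a child $\r i$ and then returns with quenched probability $1-\beta_{\r i}(\omega)$. Hence
\[
1-q(\omega)=\sum_{i=1}^b\omega(\r,\r i)\,\beta_{\r i}(\omega)=\frac{\sum_{i=1}^b A_{\r i}\,\beta_{\r i}(\omega)}{1+\sum_{j=1}^b A_{\r j}}.
\]
Bounding the denominator of $\bE_\omega[L_\r^{\,p}]$ from below by keeping only the smallest $\beta$, with $\beta_{\min}\Def\min_{1\le i\le b}\beta_{\r i}(\omega)$, and using $\dfrac{1+\sum_j A_{\r j}}{\sum_i A_{\r i}}=1+\dfrac{1}{\sum_i A_{\r i}}$, this yields
\[
\bE_\omega[L_\r^{\,p}]\le C_p\left(\frac{1+\sum_{j=1}^b A_{\r j}}{\sum_{i=1}^b A_{\r i}\,\beta_{\r i}}\right)^{\!p}\le C_p\,\beta_{\min}^{-p}\left(1+\frac{1}{\sum_{j=1}^b A_{\r j}}\right)^{\!p}.
\]

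Finally I would integrate over the environment, using $\bE[L_\r^{\,p}]=\E\bigl[\bE_\omega[L_\r^{\,p}]\bigr]$. The crucial structural point is that $\sum_j A_{\r j}$ is measurable with respect to $\mathbf A_\r$, whereas $\beta_{\min}$ is measurable with respect to the environment on the disjoint subtrees $\L_{\r 1},\dots,\L_{\r b}$ hanging below the children of $\r$; hence the two factors above are independent, and moreover $\beta_{\r 1},\dots,\beta_{\r b}$ are i.i.d.\ copies of $\beta_\r$. Therefore, with $(1+t)^p\le 2^{p-1}(1+t^p)$,
\[
\bE[L_\r^{\,p}]\le C_p\,\E\bigl[\beta_{\min}^{-p}\bigr]\,\E\Bigl[\bigl(1+(\textstyle\sum_j A_{\r j})^{-1}\bigr)^p\Bigr]\le C_p\,2^{p-1}\Bigl(\sum_{i=1}^b\E[\beta_{\r i}^{-p}]\Bigr)\Bigl(1+\E\bigl[(\textstyle\sum_j A_{\r j})^{-p}\bigr]\Bigr),
\]
and the right-hand side is finite: the first bracket equals $b\,\E[\beta_\r^{-p}]<\infty$ by Proposition~\ref{prop:ku1}, and the second is finite by assumption~\eqref{na1} (note $\E[\beta_{\min}^{-p}]=\E[\max_i\beta_{\r i}^{-p}]\le\sum_i\E[\beta_{\r i}^{-p}]$).

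The only genuinely delicate point is the measurability/independence bookkeeping in the last step: one must identify precisely which sub-$\sigma$-field each of $\sum_j A_{\r j}$ and $\beta_{\min}$ lives in, so that the expectation factorizes and each $\beta_{\r i}$ is recognized as a distributional copy of $\beta_\r$. Everything else is a short computation; I note in passing that $p>1$ already suffices here, the stronger $p>2$ being needed only later, for \eqref{sec-cond-na}.
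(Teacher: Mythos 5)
Your proof is correct, but it takes a genuinely different route from the paper's. You identify $L_\r$ directly as a quenched geometric variable with success parameter $1-q(\omega)=\sum_{i}\omega(\r,\r i)\beta_{\r i}(\omega)$, bound $(1-q)^{-1}\le \beta_{\min}^{-1}\bigl(1+(\sum_j A_{\r j})^{-1}\bigr)$, and then factorize the annealed expectation using the independence of $\mathbf{A}_\r$ from the $\sigma$-field generated by the subtree environments below the children (which is exactly the right bookkeeping: each $\beta_{\r i}$ is measurable with respect to $(\mathbf{A}_\nu)_{\nu\succeq \r i}$ and is an i.i.d.\ copy of $\beta_\r$, so $\E[\beta_{\min}^{-p}]\le b\,\E[\beta_\r^{-p}]<\infty$ by Proposition~\ref{prop:ku1}, while the second factor is finite by \eqref{na1}). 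The paper instead never touches the quenched law of $L_\r$ itself: it uses Remark~\ref{remna1} to see that $L_{\parent\r}$ is Geometric$(\beta_\r)$, controls its $p$-th moment via the explicit estimate of Proposition~\ref{prop:naap} together with $x/(-\ln(1-x))\in(0,1)$, and then dominates $L_\r\le 1+L_{\parent\r}+\sum_{i=1}^b\widetilde L_i$, where the $\widetilde L_i$ are visit counts of the extension processes $\X^{\L_i}$, each distributed as $L_{\parent\r}$. Your argument is more self-contained and elementary for this particular step (no extensions needed, and as you note it only requires $\E[\beta_\r^{-p}]<\infty$ and \eqref{na1} for the given $p$); the paper's decomposition buys a cleaner reduction to a single-parameter geometric (avoiding the minimum over children and the independence factorization) and reuses the extension machinery already set up in Section~\ref{sec:ext}. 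Both yield \eqref{na5} under the same hypotheses.
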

\begin{proof}
 In virtue of Remark~\ref{remna1},  combined with  Proposition~\ref{prop:naap} in the Appendix, we have 
\begin{equation}\label{na8}
\begin{aligned}
 \bE[L_{\parent\r}^p] &\le 1+ C_p \E\left[ \frac{ \beta_\r}{ (- \ln(1 - \beta_\r))^{p+1}}\right]\\
 & \le 1+ {C_p} \E\left[ \frac{1}{ \beta_\r^{p}}\right]<\infty,
 \end{aligned}
 \end{equation}
where in the last step we used the fact that
{$x/(-\ln(1 - x)) \in (0,1)$ for any $x \in (0,1)$,} and Proposition~\ref{prop:ku1}. Recall that the subtrees $\Lambda_\nu$ was introduced at the beginning of Section~\ref{cutlev}.  
Next, denote by   $\Lambda_i$ the tree composed by $\r$, $\r i$, all the descendants of $\r i$, and the edges connecting them.
Denote by $\widetilde{L}_i$ the number of visits to $\r$ by the extension $\mathbf{X}^{\L_i}$, i.e.
$$ \widetilde{L}_i \Def \sum_{k=0}^\infty \1_{\{X^{\L_i}_k = \r\}}.$$
Under the measure $\bP$, $\widetilde{L}_i$ and $L_{\parent \r}$ are equally distributed.  Hence, $\bE[\widetilde{L}_i^p] < \infty$. Finally
$$ L_\r \le 1 + L_{\parent \r} + \sum_{i=1}^b \widetilde{L}_i,$$
proving our result.
\hfill \end{proof}
\begin{lemma} There exists a collection of random variables $(\bar{L}_{\nu})_\nu$, such that  $\bar{L}_{\nu} \sim L_\r$ and $L_{\nu}  \le \bar{L}_{\nu},$ a.s.,  for all $\nu \in V$, with $\nu \neq \parent{\r}$.
\end{lemma}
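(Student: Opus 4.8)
The plan is to construct the dominating family $(\bar L_\nu)_\nu$ by mimicking, for each vertex $\nu\neq\parent\r$, the argument already used for $\r$, but carried out inside the subtree $\L_\nu$ introduced at the beginning of Section~\ref{cutlev}. Recall that $\L_\nu$ — consisting of $\nu$, its parent $\parent\nu$, all descendants of $\nu$ and the connecting edges — is isomorphic to the original augmented tree $\Gcal$, with $\nu$ playing the role of $\r$ and $\parent\nu$ the role of $\parent\r$. First I would set
$$\bar L_\nu \Def 1 + \sum_{k=0}^\infty \1_{\{X^{\L_\nu}_k = \nu\}},$$
the number of visits to $\nu$ made by the \emph{extension} process $\X^{\L_\nu}$ defined in Section~\ref{sec:ext} (the $+1$ is a harmless cushion). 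Since $\L_\nu$ is isomorphic to $\Gcal$ and the environment $(\mathbf{A}_\mu)_{\mu\in V}$ is i.i.d., the law of this count under $\bP$ is exactly the law of $1+L_{\parent\r}$ under $\bP$; actually it is cleaner to absorb the $+1$ and the contributions of the $b$ subtrees hanging off $\nu$ so that $\bar L_\nu$ has precisely the law of $L_\r$ — concretely, one sets $\bar L_\nu = 1 + \widetilde L^{(\nu)}_{\parent\nu} + \sum_{i=1}^b \widetilde L^{(\nu)}_i$ with the three summands built from the extension on $\L_\nu$ exactly as in the proof of \eqref{na5}. By that proposition, $\bE[\bar L_\nu^p] = \bE[L_\r^p] < \infty$, so $\bar L_\nu \sim L_\r$ as required.

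The second point is the domination $L_\nu \le \bar L_\nu$ a.s. Here I would invoke the key structural fact established in Section~\ref{sec:ext}: the steps of $\X$ restricted to any proper subtree $\mathcal G'$ of $\Gcal$ coincide with the steps of the extension $\X^{(\mathcal G')}$, because both are driven by the \emph{same} sub-collection of clocks $\mathbf Y$. Applying this with $\mathcal G' = \L_\nu$, every visit of $\X$ to $\nu$ is in particular a step of $\X$ within $\L_\nu$, hence corresponds to a visit of $\X^{\L_\nu}$ to $\nu$; conversely $\X^{\L_\nu}$ may make extra visits to $\nu$ that $\X$ never makes (if $\X$ simply never reaches $\nu$, or reaches it and then wanders off into parts of the tree not in $\L_\nu$ in a way the extension does not replicate). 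In all cases the visit count of $\X$ to $\nu$ is bounded by that of $\X^{\L_\nu}$, giving $L_\nu \le \bar L_\nu$ pathwise.

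I expect the main obstacle to be pinning down precisely the sense in which ``$\X$ on $\L_\nu$ equals $\X^{\L_\nu}$'' yields the pathwise inequality, since $\X$ leaves and re-enters $\L_\nu$ through $\parent\nu$ repeatedly, and between such excursions the clock indices that $\X^{\L_\nu}$ consults advance differently from those $\X$ consults (the global process $\X$ may burn clocks at $\nu$'s ancestors that the extension, living on $\L_\nu$, ignores). The careful statement is that the \emph{ordered sequence of edges of $\L_\nu$ traversed by $\X$} is a subsequence of the edge sequence traversed by $\X^{\L_\nu}$ up to the first time (if any) $\X^{\L_\nu}$ makes a move that $\X$ does not — and one checks that at every vertex the relevant minimum-over-clocks rule in \eqref{ursula} is consistent, because the clock index $k_\mu$ used at a given visit depends only on the number of prior traversals of that oriented edge \emph{within $\L_\nu$}, which is the same quantity for both processes as long as they have agreed so far. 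Once this coupling bookkeeping is done, counting visits to $\nu$ specifically (rather than general edges) makes the inequality $L_\nu \le \bar L_\nu$ immediate, and the distributional claim follows from the isomorphism $\L_\nu \cong \Gcal$ together with the i.i.d. environment and the independence of disjoint clock sub-collections noted in Section~\ref{sec:ext}.
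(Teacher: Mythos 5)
Your overall strategy is the paper's: define $\bar L_\nu$ via the extension $\X^{\L_\nu}$, obtain its law from the isomorphism $\L_\nu\cong\Gcal$, and obtain the pathwise bound from the fact that the steps of $\X$ inside $\L_\nu$ are given by the steps of $\X^{\L_\nu}$. The domination half of your argument, including the observation that the clock index $k_\mu$ consulted at a given visit depends only on the number of prior traversals of that oriented edge \emph{within} $\L_\nu$ and hence agrees for the two processes as long as they have agreed so far, is correct and in fact more carefully justified than in the paper, which disposes of this point in a single sentence.

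The distributional half, however, is garbled in a way that makes the stated conclusion false. The raw count $\sum_{k\ge0}\1_{\{X^{\L_\nu}_k=\nu\}}$ already has exactly the law of $L_\r$: under the isomorphism $\L_\nu\cong\Gcal$ the vertex $\nu$ corresponds to $\r$ (and $\parent\nu$ to $\parent\r$), and the environment restricted to $\L_\nu$ is i.i.d.\ with the same marginals. Your ``$+1$ cushion'' turns this into something distributed as $1+L_\r$, which is not $\sim L_\r$; your identification of the count with $1+L_{\parent\r}$ is also wrong, since under the isomorphism that quantity corresponds to visits to $\parent\r$, not to $\r$; and your ``cleaner'' replacement $1+\widetilde L^{(\nu)}_{\parent\nu}+\sum_{i=1}^b\widetilde L^{(\nu)}_i$ is worse still, because in the proof of \eqref{na5} the corresponding expression is only an \emph{upper bound} for the visit count (the paper proves $L_\r\le 1+L_{\parent\r}+\sum_i\widetilde L_i$, not equality), so it is stochastically strictly larger than $L_\r$. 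The fix is simply to drop all the cushions and set $\bar L_\nu=\sum_{k\ge0}\1_{\{X^{\L_\nu}_k=\nu\}}$, which is what the paper does. For the downstream use in Proposition~\ref{propls} only $\sup_\nu\bE[\bar L_\nu^p]<\infty$ matters, so your variants would still serve that purpose, but as written they do not prove the lemma as stated.
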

\begin{proof}
Consider the extension $\bX^{\Lambda_\nu}$. Set
\begin{equation}\label{barl} \bar{L}_{\nu} \Def \sum_{k=0}^\infty \1_{\{X^{\Lambda_\nu}_k = \nu\}}.
\end{equation}
By the definition of the extension, $\bar{L}_{\nu}$ { shares the same  distribution as $L_\r$. Moreover, $\bar{L}_{\nu} \sim L_\r $ as $\bX$ observed while in $\L_\nu$ and the extension coincide up to the last time the former process leaves $\L_\nu$ and never returns to it.}
\hfill
\end{proof}
{Let $(\sigma_i)_i$  be the sequence vertices visited by the process $\bX$, ordered chronologically. More precisely, $\nu = \sigma_i$ if and only if  $\nu$ is the $i$-th  distinct vertex visited by  $\bX$ and $\ell_1 > |\nu|$.}
{
\begin{proposition}\label{propls} For any $p>0$, there exists    $C_p \in (0, \infty)$, such that 
$$ \sup_{k \in \N} \bE\left[L_{\sigma_k}^p\right] < C_p.$$
\end{proposition}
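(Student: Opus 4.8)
\emph{Proof plan.} The plan is to discard the level constraint $\ell_1>|\nu|$ implicit in the definition of $\sigma_k$ — which can only increase the quantity to be bounded — and then to combine the coupling $L_\nu\le \bar L_\nu$ of the preceding lemma with the independence built into the extension construction of Section~\ref{sec:ext}. For $\nu\in V$ I introduce the event $B_{k,\nu}$ that $\nu$ is the $k$-th distinct vertex visited by $\bX$, vertices listed in chronological order of first visit (with $B_{k,\nu}=\emptyset$ if $\bX$ ever visits fewer than $k$ distinct vertices). Since $\bX$ is transient it visits infinitely many vertices a.s., so the events $(B_{k,\nu})_{\nu\in V}$ are pairwise disjoint with $\sum_\nu\bP(B_{k,\nu})=1$. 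Because $\{\sigma_k=\nu\}\subseteq B_{k,\nu}$, adopting the convention $L_{\sigma_k}:=0$ when $\sigma_k$ is undefined, one has $L_{\sigma_k}^p\le \sum_\nu L_\nu^p\,\1_{B_{k,\nu}}$, so it suffices to bound $\sum_\nu\bE[L_\nu^p\,\1_{B_{k,\nu}}]$ uniformly in $k$.

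Fixing $\nu\neq\parent\r$, I would first observe that $B_{k,\nu}$ is determined by the trajectory $(X_0,\dots,X_{T_\nu})$ of $\bX$ up to its first visit to $\nu$, which by the construction of Section~\ref{sec:ext} is a measurable function of the clocks $Y(x,\cdot,\cdot)$ whose first argument $x$ is a vertex visited strictly before $T_\nu$; every such $x$ lies outside $\{\nu\}\cup\{\text{descendants of }\nu\}$, since the subtree rooted at $\nu$ cannot be entered before $\nu$ itself is reached. On the other hand $\bar L_\nu$, the number of visits to $\nu$ by the extension $\bX^{\Lambda_\nu}$ of \eqref{barl}, is a function of the clocks $Y(x,\cdot,\cdot)$ with $x\in\{\nu\}\cup\{\text{descendants of }\nu\}$ only: from the root $\parent\nu$ of $\Lambda_\nu$ the extension moves to $\nu$ deterministically, $\nu$ being its unique neighbour in $\Lambda_\nu$, so $\bar L_\nu$ does not depend on the clocks $Y(\parent\nu,\nu,\cdot)$. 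These two sub-collections of $\mathbf Y$ are disjoint, hence $B_{k,\nu}$ and $\bar L_\nu$ are independent. Using $L_\nu\le \bar L_\nu$ a.s.\ and $\bar L_\nu\sim L_\r$ (preceding lemma), this gives
\[
\bE[L_\nu^p\,\1_{B_{k,\nu}}]\ \le\ \bE[\bar L_\nu^{\,p}\,\1_{B_{k,\nu}}]\ =\ \bE[\bar L_\nu^{\,p}]\,\bP(B_{k,\nu})\ =\ \bE[L_\r^p]\,\bP(B_{k,\nu}).
\]

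For the single remaining vertex $\parent\r$ I bound crudely $\bE[L_{\parent\r}^p\,\1_{B_{k,\parent\r}}]\le\bE[L_{\parent\r}^p]<\infty$, finiteness being \eqref{na8}. Summing over all $\nu$ and using $\sum_\nu\bP(B_{k,\nu})=1$,
\[
\sup_{k\in\N}\bE[L_{\sigma_k}^p]\ \le\ \sup_{k\in\N}\sum_\nu\bE[L_\nu^p\,\1_{B_{k,\nu}}]\ \le\ \bE[L_{\parent\r}^p]+\bE[L_\r^p]\ <\ \infty,
\]
the last finiteness being \eqref{na5} and \eqref{na8}; the bound does not depend on $k$, so one takes $C_p$ equal to this quantity plus $1$. (Note that $p$ enters the argument only through the requirement $\bE[L_\r^p]<\infty$.)

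The one step that will need care is the measurability bookkeeping used to prove independence: identifying exactly which sub-collection of the i.i.d.\ clocks $\mathbf Y$ governs $B_{k,\nu}$ and which governs $\bar L_\nu$, and verifying that these are disjoint. The only genuinely subtle point there is that $\bar L_\nu$ must be shown not to depend on the clocks $Y(\parent\nu,\nu,\cdot)$ — which $\bX$ itself does consult while moving above $\nu$, before ever entering the subtree rooted at $\nu$ — and this holds precisely because the first step $\parent\nu\to\nu$ of the extension $\bX^{\Lambda_\nu}$ is forced (it is the only available move from the root of $\Lambda_\nu$). Granting this, the remainder is a routine independence/Fubini computation.
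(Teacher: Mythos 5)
Your argument is essentially the paper's own proof: bound $L_\nu$ by the extension count $\bar L_\nu$, exploit that $\bar L_\nu$ is built from clocks attached to $\nu$ and its descendants while the event identifying the $k$-th visited vertex is built from a disjoint sub-collection of $\mathbf Y$, and sum over $\nu$ using that these events partition (up to a null set) the sample space. Your only deviation — replacing $\{\sigma_k=\nu\}$ by the larger event $B_{k,\nu}$ that drops the condition $\ell_1>|\nu|$ — is a harmless (indeed slightly more careful) upper bound, since $\ell_1$ does depend on the clocks inside $\Lambda_\nu$; the paper glosses over this point but the conclusion is the same.
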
}
{\begin{proof}
Consider the random variable $\bar{L}_{\nu}$ defined in \eqref{barl}. Notice that $\bar{L}_{\nu}$ and the event $\{\sigma_k = \nu\}$ are independent.  In fact the latter is generated  using the collection of Poisson processes  
$$ \{ Y(x, y)\colon \mbox{none of $x$ and $y$ are descendants of $\nu$}\},$$
where the processes $Y$ were introduced in 
\eqref{defY} and each vertex $\nu$ is NOT considered a descendant of itself, while the extension is generated using a disjoint collection of Poisson processes. 
Hence 
$$ 
\begin{aligned}
\bE\left[L_{\sigma_k}^p\right] &=  \bE\left[\sum_{\nu} L_{\nu}^p \1_{\sigma_k = \nu}\right]\le \sum_{\nu} \bE\left[ \bar{L}_{\nu}^p \1_{\sigma_k = \nu}\right]\\
&= \sum_{\nu} \bE\left[ \bar{L}_{\nu}^p\right] \P(\sigma_k = \nu) \\
&= \bE\left[ \bar{L}_{\r}^p\right]<\infty.
\end{aligned}
$$ 
Hence, by taking $C_p = \bE\left[ \bar{L}_{\r}^p\right]$ we conclude our proof. \hfill
\end{proof}}
\begin{proposition} \label{prop:Proposition15}
$\bE[\tau_1^2]<\infty.$
\end{proposition}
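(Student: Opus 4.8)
The plan is to dominate $\tau_1$ by a random sum of total local times and then estimate the moments of that sum by H\"older's inequality, which conveniently separates the (uniformly controlled) moments of the individual local times from the moments of the number of summands, so that no independence between the two quantities is needed.

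The first step is a deterministic bound. By Definition~\ref{defregene} we have $\sup_{j<\tau_1}|X_j|<|X_{\tau_1}|=\ell_1$, so every vertex occupied by $\bX$ at a time $k<\tau_1$ lies at a level strictly below $\ell_1$; such a vertex is therefore either $\parent\r$ or one of $\sigma_1,\dots,\sigma_\Pi$. Since the number of visits to a given vertex before time $\tau_1$ is at most its total local time $L_\nu$, summing over the distinct vertices occupied before $\tau_1$ gives
\[
 \tau_1 \;\le\; L_{\parent\r} \;+\; \sum_{j=1}^{\Pi} L_{\sigma_j} \;=\; L_{\parent\r} \;+\; \sum_{j=1}^{\infty} L_{\sigma_j}\,\1_{\{j\le\Pi\}}.
\]

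Next I would take the $L^2(\bP)$-norm of both sides and apply Minkowski's inequality to the series. Since $\bE[L_{\parent\r}^{\,p}]<\infty$ with $p>2$ by \eqref{na8}, it suffices to show that $\sum_{j\ge1}\bE\big[L_{\sigma_j}^2\1_{\{j\le\Pi\}}\big]^{1/2}<\infty$. For a fixed $j$, H\"older's inequality with the conjugate exponents $p/2$ and $p/(p-2)$ (admissible because $p>2$, cf. \eqref{na1}) yields
\[
 \bE\big[L_{\sigma_j}^2\,\1_{\{j\le\Pi\}}\big] \;\le\; \bE\big[L_{\sigma_j}^{\,p}\big]^{2/p}\,\bP(\Pi\ge j)^{(p-2)/p} \;\le\; C_p^{2/p}\,\bP(\Pi\ge j)^{(p-2)/p},
\]
where the uniform-in-$j$ bound $\sup_j\bE[L_{\sigma_j}^{\,p}]\le C_p$ is precisely Proposition~\ref{propls}. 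Finally, \eqref{naups2} gives $\bE[\Pi^q]<\infty$ for every $q>1$, so Markov's inequality provides $\bP(\Pi\ge j)\le \bE[\Pi^q]\,j^{-q}$; choosing any $q>2p/(p-2)$ makes $\bE\big[L_{\sigma_j}^2\1_{\{j\le\Pi\}}\big]^{1/2}$ a summable $O\big(j^{-q(p-2)/(2p)}\big)$, and the conclusion $\bE[\tau_1^2]<\infty$ follows.

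There is no genuine obstacle here, only one point that must be used correctly: the argument needs $\Pi$ to possess finite moments of \emph{all} orders, not merely of order $p$, and this is exactly what \eqref{naups2} supplies — which itself rests on the geometric tail of $\ell_1$ from Theorem~\ref{Expotail}. Symmetrically, it is essential that Proposition~\ref{propls} controls $\bE[L_{\sigma_j}^{\,p}]$ \emph{uniformly} in $j$, since it is the polynomial decay of $\bP(\Pi\ge j)$, rather than any decay of the local-time moments, that produces the summability.
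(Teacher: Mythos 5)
Your proof is correct and follows essentially the same route as the paper's: dominate $\tau_1$ by the sum of the total local times over the vertices visited before $\tau_1$, then combine the uniform-in-$k$ moment bound of Proposition~\ref{propls} with the finiteness of all moments of $\Pi$ from \eqref{naups2} via H\"older's inequality. The only difference is cosmetic: you expand the random sum with Minkowski's inequality in $L^2$ and Markov's inequality on $\bP(\Pi\ge j)$, whereas the paper first applies Cauchy--Schwarz to reduce to $\bE\bigl[\Pi\sum_{k=1}^{\Pi}L_{\sigma_k}^2\bigr]$ and then H\"older termwise --- both hinge on exactly the same two ingredients.
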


\begin{proof}
The vertices  $\sigma_1, \sigma_2, \ldots, \sigma_{\Pi}$ are the vertices visited before time $\tau_1$. We have
\begin{equation}\label{na10}
\begin{aligned}
\bE[\tau_1^2] \le \bE\left[\left(\sum_{k=1}^\Pi L_{\sigma_k}\right)^2\right]\le \bE\left[ \Pi \sum_{k=1}^\Pi L_{\sigma_k}^2\right].
\end{aligned}
\end{equation}
Let $q>1$ the conjugate of $p/2$, i.e. $(1/q)+ (2/p)=1$.  By H\"older's inequality, {and using Proposition~\ref{propls}}, we have that
\begin{equation}\label{na11}
\begin{aligned}
 \bE\left[ \Pi \sum_{k=1}^\Pi L_{\sigma_k}^2\right] &=\bE\left[ \sum_{k=1}^\infty L_{\sigma_k}^2 \Pi \1_{\{\Pi \ge k\}}\right]\\
 &\le \sum_{k=1}^\infty {\bE\left[L_{\sigma_k}^p\right]^{2/p}} \bE\left[\Pi^q\1_{\{\Pi \ge k\}}\right]^{1/q}\\
 &\le {(C_p)^p} \sum_{k=1}^\infty\bE\left[\Pi^q\1_{\{\Pi \ge k\}}\right]^{1/q} <\infty,
 \end{aligned}
\end{equation}
{where $C_{p}$ is the same as in Proposition~\ref{propls}.}
\hspace{\fill} \end{proof}
{Finally, notice that 
$$ \bE[(\tau_2 - \tau_1)^2] = \bE[\tau_1^2\;|\; T_{\parent\r} = \infty] \le  \frac{\bE[\tau_1^2]}{\bP(T_{\parent\r} = \infty)} <\infty,$$
which proves \eqref{sec-cond-na}.}
\section{Linearly Edge-Reinforced Random Walks}
 Define a discrete time process $\bX$ as follows. It takes as values the vertices of $\Gcal$. Initially all the edges are given weight one, and $X_0 = \r$. Whenever an edge is traversed, i.e. the process jumps from one of its endpoints to the other, the weight of the edge is increased by one, and the process jumps to nearest neighbors with probabilities proportional to the weights of the connecting edges. Notice, that this process can be represented as a RWRE with the following environment (see  \cite{Pemtree}).
 To each vertex $\nu \neq \parent \r$, assign  an independent $(b+1)$-dimensional random vector $\mathbf{Z}_\nu = (Z^{\ssup \nu}_0, Z^{\ssup \nu}_1, \ldots, Z^{\ssup \nu}_b)$, distributed as a Dirichlet distribution, with parameters $(1, 1/2, 1/2, \ldots, 1/2)$.
{The distribution of the vector assigned to $\r$ is still a Dirichlet distribution but with different parameters. This exception does not affect our analysis, as we are interested in a limit theorem.}
  Set 
 $$ A_{\nu i} = \frac{Z^{\ssup \nu}_i}{Z^{\ssup \nu}_0}.$$
Theorem \ref{mainth} can be applied to this process to yield a functional central limit theorem for $b \ge 5$. In fact, condition \eqref{na0} is satisfied (see Pemantle \cite{Pemtree}).
We have 
$$ 
\begin{aligned}
\E \left[\Big(\sum_{1 \le i \le b} A_{\r i}\Big)^{-p} \right] &= \E \left[\Big(\frac{Z_0^{\ssup \r}}{1- Z_0^{\ssup \r}}\Big)^p\right]\\
 &= \int_0^1   \frac{\Gamma(\frac{b}{2}+1)}{\Gamma(\frac{b}{2})} \left(\frac{x}{1-x}\right)^p (1-x)^{\frac{b}{2} -1} \d x \\
 & = \dfrac{B(1+p,\frac{b}{2}-p)}{B(1,\frac{b}{2})},
\end{aligned}
$$
which is finite if and only if $b/2-p>0$, i.e. $b>2p>4$.
Hence condition \eqref{na1} is satisfied for $b \ge 5$.

\begin{remark} As in \cite{Pemtree}, let us consider the more general situation where each time the process traverses an edge, its weight is increased by $\Delta>0$. In this case $\mathbf{Z}_\nu = (Z^{\ssup \nu}_0, Z^{\ssup \nu}_1, \ldots, Z^{\ssup \nu}_b)$ is distributed as a Dirichlet distribution with parameters $((1+\Delta)/(2\Delta), 1/(2\Delta), 1/(2\Delta), \ldots, 1/(2\Delta))$, and
$$ 
\begin{aligned}
\E \left[\Big(\sum_{1 \le i \le b} A_{\r i}\Big)^{-p} \right]
 & = \dfrac{B(\frac{1+\Delta}{2\Delta}+p,\frac{b}{2\Delta}-p)}{B(\frac{1+\Delta}{2\Delta},\frac{b}{2\Delta})}.
\end{aligned}
$$
Thus condition \eqref{na1} is satisfied if and only if $0<\Delta<b/4$.
\end{remark}


Now we turn to the proof of Theorem~\ref{main:LERRW} for the case $b=4$. For any $\nu \in V$, with $|\nu| \ge 0$, define
\begin{align*}
T_{\nu}^+  &\Def \inf\{k>0 \colon X_k = \nu\}, \qquad
\gamma_\nu (\omega) \Def\bP^\nu_{\omega}(T_{\parent{\nu}} = \infty, {T^+_{\nu}} = \infty).
\end{align*}
{
Recall the definition of $\omega(\nu, \nu i)$ given in \eqref{defomega}, and that for LERRW on the $b$-regular tree and for $|\nu| \ge 1$, we have $\omega(\nu, \nu i)$ is  distributed as Beta$(1/2, (b+1)/2)$, while   $\omega(\nu, \parent{\nu})$ is a Beta$(1, b/2)$.} The reasoning presented in this section follows closely the one given in  section 7 in \cite{Aid}. 
\begin{remark}\label{assumptionL}
We  fix $\epsilon \in (0, 1/3)$, and  we can assume $\omega(\r,\parent{\r})\leq 1-\epsilon$ without loss of generality.  In fact, our goal is to prove a  limit theorem, and the process is transient. Hence the influence of $\omega(\r,\parent{\r})$ vanishes in the limit. 
\end{remark}
\begin{proposition}\label{prop:Yuuma4-1}
For LERRW on the 4-regular tree, {there exists a positive finite constant $C$ such that} for any vertex  $\nu$ with $|\nu| \ge 0$, we have
\[
\E\left[\left(\frac{\1_{\{\omega(\nu, \parent{\nu})\leq 1-\epsilon\}}}{\gamma_\nu (\omega)}\right)^{{\frac {28}{9}}}\right] {\leq C}.
\] 
\end{proposition}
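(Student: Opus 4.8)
The plan is to get a recursive inequality for $1/\gamma_\nu$ in the spirit of \eqref{na9.1}, and then iterate it along the maximal-weight ray exactly as in the proof of Proposition~\ref{prop:ku1}, but keeping track of the extra constraint $T^+_\nu = \infty$. First I would write the one-step decomposition for $\gamma_\nu$: starting at $\nu$, the walk must on its first step go to one of the children $\nu i$ (probability $\omega(\nu,\nu i)$), never return to $\nu$ from below, and from $\nu i$ never come back up to $\nu$; summing over $i$ and using the Markov property yields an identity analogous to \eqref{na9}. Rearranging, I expect something of the form
\[
\frac{1}{\gamma_\nu(\omega)} \;\le\; \frac{1}{\omega(\nu,\parent\nu)} \cdot \frac{1}{\sum_{i=1}^b \omega(\nu,\nu i)\,\beta_{\nu i}(\omega)} \;\le\; \frac{1}{\omega(\nu,\parent\nu)}\Bigl(1 + \min_{1\le i\le b}\frac{1}{A_{\nu i}\,\beta_{\nu i}(\omega)}\Bigr),
\]
where the factor $1/\omega(\nu,\parent\nu)$ is harmless on the event $\{\omega(\nu,\parent\nu)\le 1-\epsilon\}$ — it is bounded by $1/\epsilon$ only from below, so actually I want the lower bound $\omega(\nu,\parent\nu)\ge$ (something): here I should be careful, since on the truncation event $\omega(\nu,\parent\nu)\le 1-\epsilon$ means the numerator weights are not too small, giving control of $\sum_i A_{\nu i}$ from below. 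The point is that $\gamma_\nu$ differs from $\beta_\nu$ only by the first-step factor, and on the truncated event that factor is comparable to $\beta_\nu$ up to constants.

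Next I would run the same ``maximal-weight path'' construction as in Proposition~\ref{prop:ku1}: set $v_0=\nu$, let $v_{k+1}$ be a child of $v_k$ maximizing $A_{\cdot}$, define the events $E_n$ forcing $A_y\beta_y<\epsilon$ for all $y$ off the path up to level $n$, and iterate the recursion to obtain a bound of the shape $\1_{\{\omega(\nu,\parent\nu)\le 1-\epsilon\}}/\gamma_\nu \le C_\epsilon\bigl(1+\sum_{n\ge1} B(n)\bigr)$ with $B(n)=\1_{E_n}\prod_{j=1}^n A_{v_j}^{-1}$. Then raise to the power $q:=28/9$, apply the elementary inequality \eqref{na3} with exponent $q$, take expectations, and use $\E[B(n)^q]=\E[\1_{E_1}A_{v_1}^{-q}]^n =: c^n$. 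By choosing $\epsilon$ small the factor $\1_{E_1}$ forces $A_{v_1}$ to be the \emph{largest} of $b=4$ i.i.d. Beta-type variables while all others are small, so $c<2^{-(q-1)}$ provided $\E[A_{\r 1}^{-q}]<\infty$ for the relevant $A$; this is where the precise value $q=28/9$ enters — one computes the Beta-function condition for finiteness of $\E\bigl[(\sum_i A_{\r i})^{-q}\bigr]$ (or of a single $\E[A_{\r i}^{-q}]$ against the beta parameters $(1/2,(b+1)/2)$ with $b=4$), and $28/9$ is the exponent chosen to stay just inside the integrability window while being large enough that $q>2$ is comfortably exceeded so that the resulting second-moment estimates for $\tau_1$ go through later.

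The main obstacle I anticipate is the bookkeeping around the truncation indicator $\1_{\{\omega(\nu,\parent\nu)\le 1-\epsilon\}}$ and the extra return-avoidance constraint in $\gamma_\nu$ versus $\beta_\nu$: one must check that the recursion still closes when the restart event at each vertex also forbids an immediate self-return, and that the truncation at the root level does not propagate (it shouldn't, since the recursion only ever calls $\beta$, not $\gamma$, at deeper vertices, and those $\beta$'s are un-truncated and already controlled by Proposition~\ref{prop:ku1}'s method). A secondary technical point is verifying the arithmetic $c<2^{-(28/9-1)}=2^{-19/9}$ for small $\epsilon$, which reduces to: (i) $\P(E_1)\to$ (probability a fixed coordinate is the max) $=1/b=1/4$ as $\epsilon\to0$ is false — rather $\P(E_1)\to 0$ since $E_1$ also requires the $b-1$ off-path weights $A_y$ to satisfy $A_y\beta_y<\epsilon$, which has small probability; so in fact $\E[\1_{E_1}A_{v_1}^{-q}]\to0$, making the geometric series trivially summable once $\E[A^{-q}]<\infty$ on the appropriate conditional event. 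Once $\E[\gamma_\nu^{-28/9}\1_{\{\cdots\}}]$ is shown finite and independent of $\nu$ (by the i.i.d. structure), the constant $C$ in the statement is just $\E[\gamma_\r^{-28/9}\1_{\{\omega(\r,\parent\r)\le1-\epsilon\}}]$, completing the proof.
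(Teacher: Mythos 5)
Your approach has a fatal gap at the key step $\E[\1_{E_1}A_{v_1}^{-q}]<\infty$ with $q=28/9$. For LERRW on the $4$-regular tree, the paper's own Beta-function computation shows $\E\bigl[(\sum_{i}A_{\r i})^{-q}\bigr]<\infty$ if and only if $q<b/2=2$. Since $A_{v_1}=\max_i A_{\r i}$ and $\P(\max_i A_{\r i}<t)\asymp t^{2}$ as $t\to0$ (the event forces $Z_0^{\ssup{\r}}$, a Beta$(1,2)$ variable, to lie within $O(t)$ of $1$), the moment $\E[A_{v_1}^{-28/9}]$ is infinite. The indicator $\1_{E_1}$ does not rescue this: $E_1$ constrains weights at the \emph{children} of $v_1$, which are independent of $\mathbf{A}_{\r}$, so $\E[\1_{E_1}A_{v_1}^{-q}]=\P(E_1)\,\E[A_{v_1}^{-q}]=+\infty$ for every $\epsilon>0$; your parenthetical claim that this quantity tends to $0$ as $\epsilon\to0$ is vacuous because it is never finite. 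This is exactly why the paper cannot rerun Proposition~\ref{prop:ku1} when $b=4$: condition \eqref{na1} fails there for every $p>2$, and a single-ray recursion with a geometric series can never manufacture a moment of order $28/9$ out of per-step factors that have no moment beyond order $2$.

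The mechanism you are missing is that $\gamma_\nu=\sum_{i=1}^{4}\omega(\nu,\nu i)\beta_{\nu i}$, so $1/\gamma_\nu\le\min_{1\le i\le4}(\omega(\nu,\nu i)\beta_{\nu i})^{-1}$ is a \emph{minimum of four} heavy-tailed quantities whose $\beta$-parts are mutually independent: the tail of the minimum is essentially the product of the individual tails, hence far lighter than that of any single $\beta_{\nu i}^{-1}$ (which only admits moments up to $35/18<2$, by A\"id\'ekon's Lemma 7.1 --- the sole external moment input the paper uses). Accordingly, the paper's proof is a direct tail-integration argument, not a recursion: it bounds $\P\bigl(\min_i(\omega(\nu,\nu i)\beta_{\nu i})^{-1/\chi}\ge n\bigr)$ by splitting each of the four events into a small-$\omega$ part and a small-$\beta$ part, uses the truncation $\omega(\nu,\parent{\nu})\le1-\epsilon$ to guarantee one child with $\omega(\nu,\nu i)\ge\epsilon/4$ (that child contributes a factor $n^{-\delta\chi}$ via Markov's inequality and the $\delta=35/18$ moment of $\beta^{-1}$), and uses the explicit Beta$(1/2,\cdot)$ densities of the remaining three conditional $\omega$'s to extract factors of order $n^{-\alpha/2}$ each; the exponents $\delta=35/18$, $\chi=9/28$, $\alpha=71/280$ are tuned so that every term in the resulting tail bound decays faster than $n^{-1}$. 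No moment of a single $A_{\nu i}^{-1}$ beyond order $1/2$ is ever required. Your proposal, as written, would in effect be claiming $\E[\beta_\r^{-28/9}]<\infty$, which the method cannot deliver and which is not what the proposition asserts.
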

\begin{proof} From now on, fix $ \delta =35/18$, $\chi = 9/28$, and $\alpha = 71/280$. From the proof of Lemma 7.1 in \cite{Aid}, we have
\begin{equation}\label{betad}
\E\left[(\beta_\nu)^{-\delta}\right]<\infty.
\end{equation}
Moreover
\[
\frac{1}{\gamma_\nu (\omega)}=\frac{1}{\sum_{i=1}^{4}\omega(\nu, \nu i)\beta_{\nu i}}\leq \min_{1\leq i \leq 4}\frac{1}{\omega(\nu, \nu i)\beta_{\nu i}}.
\]
{By Fubini's theorem, we have
\begin{align*}
&\E\left[\left(\frac{\1_{\{\omega(\nu, \parent{\nu})\leq 1-\epsilon\}}}{\gamma_\nu (\omega)}\right)^{\frac{1}{\chi}}\right] \\
&\leq \E\left[\left(\min_{1\leq i \leq 4}\frac{1}{\omega(\nu, \nu i)\beta_{\nu i}}\right)^{\frac{1}{\chi}}\1_{\{\omega(\nu, \parent{\nu})\leq 1-\epsilon\}} \right] \\
&= \int_0^{\infty} \P \bigl(\omega(\nu, \parent{\nu})\leq 1-\epsilon, \min_{1\leq i \leq 4}(\omega(\nu, \nu i)\beta_{\nu i})^{-1/\chi} \geq n \bigr) \d n.
\end{align*}
Notice that}
\begin{align*}
 \{(\omega(\nu, \nu i)\beta_{\nu i})^{-1} {\geq} n^\chi \} &\subset \{\omega(\nu, \nu i)^{-1} {\geq} n^\alpha \}\cup\{(\beta_{\nu i})^{-1} {\geq} n^{\chi-\alpha} \} \\
 &=\colon E_i^1 \cup E_i^2
\end{align*}
for each of $i$. 
  On the event $\{\omega(\nu, \parent{\nu})\leq 1-\epsilon\}$, there exists $1\leq i \leq 4$ such that $\omega(\nu, \nu i)\geq \epsilon /4$.
By symmetry,
\begin{equation}\label{Yu4}
\begin{aligned}
&\P \bigl(\omega(\nu, \parent{\nu})\leq 1-\epsilon, \min_{1\leq i \leq 4}(\omega(\nu, \nu i)\beta_{\nu i})^{-1/\chi} {\geq} n \bigr)\\
&\leq 4\P \bigl(\omega(\nu, \nu 4)\geq \epsilon /4, \min_{1\leq i \leq 4}(\omega(\nu, \nu i)\beta_{\nu i})^{-1} {\geq} n^\chi \bigr)   \qquad \mbox{(union bound)}\\
&= 4\P \bigl(\omega(\nu, \nu 4)\geq \epsilon /4,\, (\beta_{\nu 4})^{-1}{\geq} n^\chi \omega(\nu, \nu 4), \min_{1\leq i \leq 3}(\omega(\nu, \nu i)\beta_{\nu i})^{-1} {\geq} n^\chi \bigr)\\
&\leq 4\P \bigl((\beta_{\nu 4})^{-1}{\geq} n^\chi \epsilon /4, \min_{1\leq i \leq 3}(\omega(\nu, \nu i)\beta_{\nu i})^{-1} {\geq} n^\chi \bigr)\\
&\leq 4\sum_{(k_1, k_2, k_3)\in \{1, 2\}^3}\P((\beta_{\nu 4})^{-\delta}{\geq} n^{\delta\chi} (\epsilon /4)^{\delta}, \cap_{i=1}^{3}E_i^{k_i})\\
&= 4\sum_{(k_1, k_2, k_3)\in \{1, 2\}^3}\P((\beta_{\nu 4})^{-\delta}{\geq} n^{\delta\chi} (\epsilon /4)^{\delta}) \P(\cap_{i=1}^{3}E_i^{k_i})\qquad \mbox{(independence)}\\
&\le 4c_0 n^{-\delta\chi} \sum_{(k_1, k_2, k_3)\in \{1, 2\}^3} \P(\cap_{i=1}^{3}E_i^{k_i}) \qquad\mbox{(Markov's ineq. and  \eqref{betad})} \\
&=4c_0 n^{-\delta\chi} \left[\P(E_1^2)^3+3\P(E_1^2)^2 \P({E_3^1}) + 3\P(E_1^2)\P(E_2^1\cap E_3^1) + \P(\cap_{i=1}^3 E_i^1) \right].
\end{aligned}
\end{equation}
{In the last equality we used independence and symmetry.}
We have $\P(E_1^2)\leq c_1 n^{-\delta (\chi - \alpha)}$ again by Markov's inequality. Since  $\omega(\nu, {\nu 3})$ 
is a Beta($1/2, 5/2$), we have $\P({E_3^1}) \leq c_2 n^{-\alpha /2}$. Notice that conditionally to $\omega(\nu, {\nu 3})$, the random variable  $\omega(\nu, {\nu 2})/\{1-\omega(\nu, {\nu 3})\}$ is distributed as a Beta($1/2, 2$).  Hence
\[ \P(E_2^1\cap E_3^1)={\P(E_3^1)\P(E_2^1|E_3^1)}  \leq c_3 n^{-\alpha}. \]
In the same way, we have $\P(\cap_{i=1}^3 E_i^1)\leq c_4 n^{-3\alpha /2}$. Therefore,  using  \eqref{Yu4}, we have
\begin{align*}
\P \bigl(\omega(\nu, \parent{\nu})\leq 1-\epsilon, \min_{1\leq i \leq 4}(\omega(\nu, \nu i)\beta_{\nu i})^{-1/\chi} {\geq} n \bigr)
\leq\sum_{i=0}^{3}c'_i n^{-\delta\chi} n^{-\delta (\chi - \alpha)i} n^{-(3-i)\alpha/2}.
\end{align*}
With our choice of $\alpha, \delta$ and $\chi$, we have $\delta\chi + 3\alpha /2 >1$  and $\delta\chi + 3\delta (\chi - \alpha)>1$, and this completes the proof. 
\hspace{\fill}
\end{proof}
\begin{proposition}\label{prop:Yuuma4-2}
For LERRW on the 4-regular tree,
$\bE[{L_{y}^3}]<\infty$
for any $  y\in V$.
\end{proposition}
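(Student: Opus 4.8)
The plan is as follows. First I would reduce to a moment bound on the quenched escape probability: fix $y\in V$ and observe that, conditionally on the environment $\omega$ and on $\{T_y<\infty\}$, the number of visits $L_y$ is geometric on $\{1,2,\dots\}$ with success probability $1-q_y(\omega)$, where $q_y(\omega):=\bP^y_\omega(T^+_y<\infty)$ is the quenched return probability. Since a $\mathrm{Geom}(p)$ variable has third moment $(6-6p+p^2)p^{-3}\le 6p^{-3}$, taking expectations over $\omega$ gives $\bE[L_y^3]\le 6\,\E[(1-q_y)^{-3}]$, so it suffices to show $\E[(1-q_y)^{-3}]<\infty$. One always has $1-q_y\ge\gamma_y$ (first step from $y$ to a child $yi$, then never returning, has probability $\sum_i\omega(y,yi)\beta_{yi}=\gamma_y$). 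For $y=\r$ the adjoined parent always returns the walk, so $1-q_\r=\gamma_\r$; by Remark~\ref{assumptionL} the indicator in Proposition~\ref{prop:Yuuma4-1} equals $1$, hence $\E[\gamma_\r^{-28/9}]\le C$, and since $3=\tfrac{27}{28}\cdot\tfrac{28}{9}$ with $\tfrac{27}{28}<1$, Jensen's (or the power-mean) inequality yields $\E[\gamma_\r^{-3}]\le C^{27/28}<\infty$. The same computation gives $\E[\gamma_y^{-3}\,\1_{\{\omega(y,\parent y)\le1-\epsilon\}}]<\infty$, uniformly in $y$.

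Next I would treat a proper descendant $y$ on the event $\{\omega(y,\parent y)>1-\epsilon\}$, where the walk at $y$ typically steps up and $1-q_y\ge(1-\epsilon)\,\bP^{\parent y}_\omega(T_y=\infty)$. Writing $\r=v_0,v_1,\dots,v_m=y$ for the ancestral path and $\omega_j:=\omega(v_j,\parent{v_j})$, a first-step decomposition at each $v_k$ (step up the path; or step to an off-path child $\mu$ and escape into its subtree, which contributes $\tilde S_k:=\sum_{\mu\in\mathrm{ch}(v_k)\setminus\{v_{k+1}\}}\omega(v_k,\mu)\beta_\mu$; or the remaining, non-negative, cases) gives $\bP^{v_k}_\omega(T_y=\infty)\ge\tilde S_k+\omega_k\,\bP^{v_{k-1}}_\omega(T_y=\infty)$ for $1\le k\le m-1$, with a short computation at $\parent\r$ supplying $\bP^{\r}_\omega(T_y=\infty)\ge\tilde S_0$. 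Iterating,
\[
1-q_y\ \ge\ \gamma_y+\sum_{k=0}^{m-1}\Big(\prod_{j=k+1}^{m}\omega_j\Big)\tilde S_k .
\]
Let $k_0$ be the largest index in $\{0,\dots,m\}$ with $\omega_{k_0}\le1-\epsilon$ (well defined, as $\omega_0=\omega(\r,\parent\r)\le1-\epsilon$); for $j>k_0$ one has $\omega_j>1-\epsilon$, so, with the convention $\tilde S_m:=\gamma_y$, $1-q_y\ge(1-\epsilon)^{m-k_0}\tilde S_{k_0}$. Since $\{k_0=k\}=\{\omega_k\le1-\epsilon\}\cap\bigcap_{j=k+1}^{m}\{\omega_j>1-\epsilon\}$ and the second factor depends only on the environment in the subtree of $v_{k+1}$ — hence is independent of $(\tilde S_k,\1_{\{\omega_k\le1-\epsilon\}})$ — with probability $\epsilon^{2(m-k)}$ (each $\omega_j$ being Beta$(1,2)$), I would obtain
\[
\E[(1-q_y)^{-3}]\ \le\ \sum_{k=0}^{m}\big((1-\epsilon)^{-3}\epsilon^2\big)^{m-k}\;\E\!\big[\tilde S_k^{-3}\,\1_{\{\omega_k\le1-\epsilon\}}\big].
\]
As $(1-\epsilon)^{-3}\epsilon^2<1$ for $\epsilon\in(0,1/3)$, this geometric series converges once one knows the uniform bound $\sup_k\E[\tilde S_k^{-3}\,\1_{\{\omega_k\le1-\epsilon\}}]<\infty$ (the $k=m$ term is the one settled in the first step), and the resulting estimate is in fact uniform: $\sup_{y\in V}\bE[L_y^3]<\infty$.

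The hard part will be this last uniform estimate on the sideways-escape sums $\tilde S_k$. Two issues arise: $\tilde S_k$ runs over only $b-1=3$ children of $v_k$, so Proposition~\ref{prop:Yuuma4-1} cannot be quoted verbatim; and the indicator $\{\omega_k\le1-\epsilon\}$ does not by itself prevent the excluded path-child $v_{k+1}$ from carrying almost all of $v_k$'s downward transition mass, in which case $\tilde S_k$ is genuinely small. I would split once more according to whether $\omega(v_k,v_{k+1})$ exceeds a threshold: on the complement, a rerun of the computation in the proof of Proposition~\ref{prop:Yuuma4-1} with three children — using $\E[\beta_\nu^{-\delta}]<\infty$ from \eqref{betad}, that the $\omega(v_k,\mu)$ are Beta$(1/2,5/2)$, and suitably re-optimized exponents $\alpha,\chi,\delta$ so the surviving negative moment still exceeds $3$ — should give the bound; on the event that $v_{k+1}$ dominates, the walk is instead pushed down toward $v_{k+1}$, so one enlarges the escape route to allow escape into a subtree hanging off the path below $v_{k+1}$, which the walk reaches at the cost of only an extra light-tailed factor $\1_{\{\omega(v_k,v_{k+1})>1-\epsilon'\}}$, absorbed exactly like the $\epsilon^{2(m-k)}$ factors above. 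Keeping the ancestral-path recursion robust against the path-child hogging the transition mass is the main technical obstacle; everything else is bookkeeping with independence across disjoint subtrees.
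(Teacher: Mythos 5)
There is a genuine gap, and you have in fact pointed at it yourself: your argument reduces everything to the uniform bound $\sup_k\E\bigl[\tilde S_k^{-3}\,\1_{\{\omega_k\le 1-\epsilon\}}\bigr]<\infty$, where $\tilde S_k$ sums the escape contributions of only the $b-1=3$ \emph{off-path} children of $v_k$. Proposition~\ref{prop:Yuuma4-1} does not cover this quantity, and the obstruction is not cosmetic: the first move in its proof is that on $\{\omega(\nu,\parent{\nu})\le 1-\epsilon\}$ \emph{some} child carries transition mass at least $\epsilon/4$, and that child may well be the excluded path-child $v_{k+1}$, leaving $\tilde S_k$ with no lower bound on any $\omega(v_k,\mu)$. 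Your proposed repairs (a further split on $\omega(v_k,v_{k+1})$, rerunning the tail computation with three children and ``suitably re-optimized'' $\alpha,\chi,\delta$, or rerouting the escape below $v_{k+1}$) are exactly the hard part and are left unexecuted; note that the exponents in Proposition~\ref{prop:Yuuma4-1} are already tuned to clear the threshold only barely ($28/9\approx 3.11>3$, with the two constraints $\delta\chi+3\alpha/2>1$ and $\delta\chi+3\delta(\chi-\alpha)>1$ both satisfied with almost no slack), so it is not at all automatic that a three-child variant with the extra events you introduce still yields a finite negative moment of order $>3$. The parts of your argument that are complete --- the geometric reduction $\bE[L_y^3]\le 6\,\E[(1-q_y)^{-3}]$, the root case via $1-q_\r=\gamma_\r$ and Jensen, the ancestral recursion for $1-q_y$, and the summability of $((1-\epsilon)^{-3}\epsilon^2)^{m-k}$ --- are all fine, but they stop short of the crux.

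The paper avoids this difficulty entirely by a different decomposition. It works with the down-crossing count $L_y^+$ and splits it at the first visit to the youngest good ancestor $Y$ as $L_y^+=L_y^+(T_Y)+\1_{\{T_Y<\infty\}}\1_{\{T_y^Y<\infty\}}\widetilde L_y^+$. The pre-$T_Y$ piece is controlled by a quenched coupling with a biased walk on the segment $[Y,y]$ (using $\omega(g,\parent{g})>1-\epsilon$ for the intermediate vertices), which makes $L_y^+(T_Y)$ stochastically dominated by a geometric with parameter bounded away from $1$ uniformly in $\omega$; iterating the identity then gives $\bE^y_\omega[(L_y^+)^3]\le c_8\,\gamma_Y(\omega)^{-3}$ with the \emph{full} four-child escape probability $\gamma_Y$, which is precisely what Proposition~\ref{prop:Yuuma4-1} controls. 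The annealed step is then a H\"older interpolation between the $(28/9)$-th moment of $\gamma_z^{-1}$ and the geometric decay of $\P(Y=z)$. If you want to salvage your route, the cleanest fix is to import this excursion-at-$Y$ idea rather than to fight for a three-child analogue of Proposition~\ref{prop:Yuuma4-1}.
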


\begin{proof}
Define, for $\nu \in V$ and $t \geq 0$,
\begin{align*}
L_{\nu}(t)&\Def \sum_{k=0}^t \1_{\{X_k =\nu\}}, \\
L^+_{\nu}(t)&\Def \sum_{k=0}^{t-1} \1_{\{(X_k, X_{k+1})=(\nu^{-1}, \nu)\}},\\
L^-_{\nu}(t)&\Def \sum_{k=0}^{t-1} {\1_{\{(X_k, X_{k+1})=(\nu, \nu^{-1})\}}.}
\end{align*}
Noting that $L^-_{\nu}(t) \leq L^+_{\nu}(t)$ for $\nu \neq \r$, we have
\[
L_\nu(t)=\1_{\{\nu =\rho \}}+L^+_\nu(t)+\sum_{i=1}^b L_{\nu i}^-(t)\leq 1+L^+_\nu(t)+\sum_{i=1}^b L^+_{\nu i}(t).
\]
Let $L_{y}^+ \Def L^+_y(\infty)$. To prove $\bE[{L_{y}^3}]<\infty$, it is enough to show $\bE[(L_{y}^+)^3]<\infty$.

Recall that $\omega(\r,\parent{\r})\leq 1-\epsilon$ (see Remark~\ref{assumptionL}).
For  a vertex $y$,  and let   $Y$ be the youngest ancestor of $y$, with $\omega(Y,\parent{Y})\leq 1-\epsilon$. More precisely $Y$ is the vertex $z$ on the unique self-avoiding path connecting $y$ to $\r$,  which has maximum distance from $\r$ and satisfies $\omega(z,\parent{z})\leq 1-\epsilon$. We have $\omega(g,\parent{g})> 1-\epsilon$ for all  ancestors $g$ of $y$ with $|Y| < |{g}| < |y|$. 
Let $T_y^Y :=\inf\{i > T_Y   \colon X_{i} =y \}$, that is the hitting time of $y$ after $T_Y$. Notice that, on $\{X_0=y\}$,  
\begin{equation}\label{yuu1}
L_y^+=L_y^+(T_Y)+ \1_{\{T_Y <\infty\}} \1_{\{T_y^Y<\infty \}} \widetilde{L}_y^+,
\end{equation}
where $\widetilde{L}_y^+$ is an independent copy of $L_y^+$. Coupling with the simple random walk on the path $[Y, y]$, {we prove next that 
\begin{equation}\label{finpo1}
 \bP_{\omega}^y (T_y^+ < T_Y)\leq\dfrac{2}{3}.
 \end{equation}
In fact, if $Y \notin \{y, \parent{y}\}$, we have that 
$$ \bP_{\omega}^y (T_y^+ < T_Y) \le \max_{a \in [0,\eps]} \left(a \cdot  1 +  (1- a)  \frac \eps {1 -\eps}\right),
$$
where the term $ \eps/({1 -\eps})$ is derived using a coupling with a biased random walk.
As $\eps/(1- \eps) <1$, we have that the maximum is attained at $a = \eps$, which implies 
$$ \bP_{\omega}^y (T_y^+ < T_Y) \le  \epsilon + (1-\epsilon) \cdot \frac{\epsilon}{1-\epsilon}=2\epsilon \leq  \dfrac{2}{3},$$
proving
\eqref{finpo1} in the case $Y \notin \{y, \parent{y}\}$. If $Y=\parent{y}$, then 
$$ \bP_{\omega}^y (T_y^+ < T_Y) \le \epsilon \le \frac 23,$$
 while if $Y = y$ then 
$$ \bP_{\omega}^y (T_y^+ < T_Y) = 0 \le \frac 23.$$
}
Hence 
$L_y^+(T_Y)$ is stochastically dominated by a geometric distribution with average  3, that is ${\bE_{\omega}^y} [L_y^+(T_Y)]\leq 1/(1-2/3)=3$. This implies that  there exist positive constants $c_5, c_6$ such that 
\[ {\bE_{\omega}^y} [L_y^+(T_Y)^2]\leq c_5, \quad {\bE_{\omega}^y} [L_y^+(T_Y)^3]\leq c_6.\]
Therefore, by taking expectations in both sides of  \eqref{yuu1}, and using strong Markov property, we have 
\begin{align*}
\bE_{\omega}^y[L_y^+] &\leq {\bE_{\omega}^y}[L_y^+(T_Y)] + {\bP_{\omega}^y(T_Y<\infty)} \bP_{\omega}^Y(T_y <\infty)\bE_{\omega}^y[L_y^+] \\
&\leq {\bE_{\omega}^y}[L_y^+(T_Y)] +  \bP_{\omega}^Y(T_Y^+ <\infty)\bE_{\omega}^y[L_y^+] .
\end{align*}
After rearranging,
$$
\bE_{\omega}^y[L_y^+] \leq \frac{{\bE_{\omega}^y}[L_y^+(T_Y)]}{1-\bP_{\omega}^Y(T_Y^+ <\infty)}\leq \frac{{\bE_{\omega}^y}[L_y^+(T_Y)]}{\gamma_Y (\omega)}\leq \frac{3}{\gamma_Y (\omega)}.
$$
As for the second moment, we have
\begin{align*}
\bE_{\omega}^y[(L_y^+)^2]&\leq {\bE_{\omega}^y}[L_y^+(T_Y)^2] + 2{\bE_{\omega}^y}[L_y^+(T_Y)] \bP_{\omega}^Y(T_Y^+ <\infty)\bE_{\omega}^y[L_y^+]\\
&\quad 
+\bP_{\omega}^Y(T_Y^+ <\infty)\bE_{\omega}^y[(L_y^+)^2],
\intertext{and}
\bE_{\omega}^y[(L_y^+)^2]&\leq\frac{{\bE_{\omega}^y}[L_y^+(T_Y)^2] + 2{\bE_{\omega}^y}[L_y^+(T_Y)] \bP_{\omega}^Y(T_Y^+ <\infty)\bE_{\omega}^y[L_y^+]}{\gamma_Y(\omega)}\\
&\leq \frac{c_5 + 6 \cdot \frac{3}{\gamma_Y (\omega)}}{\gamma_Y(\omega)} \le { \frac{c_7}{\gamma_Y(\omega)^2}}.
\end{align*}
Turning to the third moment, we have
\begin{align*}
\bE_{\omega}^y[(L_y^+)^3]&\leq {\bE_{\omega}^y}[L_y^+(T_Y)^3] + 3{\bE_{\omega}^y}[L_y^+(T_Y)^2] \bP_{\omega}^Y(T_Y^+ <\infty)\bE_{\omega}^y[L_y^+]\\
&\quad +3{\bE_{\omega}^y}[L_y^+(T_Y)] \bP_{\omega}^Y(T_Y^+ <\infty)\bE_{\omega}^y[(L_y^+)^2]+\bP_{\omega}^Y(T_Y^+ <\infty)\bE_{\omega}^y[(L_y^+)^3],
\end{align*}
and
\begin{align*}
\bE_{\omega}^y[(L_y^+)^3]&\le \frac{c_6 + 3c_5 \bP_{\omega}^Y(T_Y^+ <\infty)\bE_{\omega}^y[L_y^+]+9 \bP_{\omega}^Y(T_Y^+ <\infty)\bE_{\omega}^y[(L_y^+)^2]}{\gamma_Y(\omega)}\\
&\le  \frac{c_8}{\gamma_Y(\omega)^3}.
\end{align*}
Recall that $\chi = 9/28$. {Using the Markov property and H\"{o}lder's inequality,}
\begin{align*}
\mathbf{E}[(L_y^+)^3]&=\E\bigl[ \bE_{\omega}[(L_y^+)^3] \bigr] = \E\bigl[ \bP_{\omega}(T_y < \infty)\bE_{\omega}^y[(L_y^+)^3] \bigr] \\
&\leq \E\left[{ \frac{c_8}{\gamma_Y(\omega)^3}}\right]\\
&=\sum_{z\in [\r, \parent{y}]} \E\left[\1_{\{Y=z\}}\1_{\{\omega(z, \parent{z})\leq 1-\epsilon\}}\left({\frac{c_8}{\gamma_z (\omega)^3}} \right)\right]\\
&\leq c_8 \sum_{z\in [\r, \parent{y}]}\P(Y=z)^{1-3\chi}   {\E\left[\left(\frac{\1_{\{\omega({z},\parent{z})\leq 1-\epsilon\}}}{\gamma_z(\omega)}\right)^{\frac{1}{\chi}}\right]^{3\chi},}
\end{align*}
For each fixed ray $\sigma = (\nu_i)_{i \in \N}$  where $\nu_{i+1} \sim \nu_i$ and $|\nu_{i+1} | = |\nu_{i}| +1$, we have that {the process $(\omega(\nu_{i+1},\nu_i))_{i \in \N}$ is composed by  \iid~random variables}. Hence 
{$\P(Y=z)\leq \P(\omega(\r1, \r)> 1-\epsilon)^{|y|-|z|}$} for any ancestor $z$ of $y$, and we have
\[ \mathbf{E}[(L_y^+)^3] \leq c_9 \sum_{n=0}^\infty {\P(\omega(\r1, \r)> 1-\epsilon)^{(1-3\chi)n}}  <\infty. \] 
This completes the proof.
\hspace{\fill}
\end{proof}
The next result is a by-product of Proposition~\ref{prop:Yuuma4-2} combined with the proof of  Proposition \ref{prop:Proposition15}.

\begin{proposition} \label{prop:Yuuma4-3}
{Consider LERRW on the 4-regular tree. For any $p \in (0, 3)$, we have $\mathbf{E}[\tau_1^{p}]<\infty$.}
\end{proposition}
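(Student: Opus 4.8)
The plan is to re-run the proof of Proposition~\ref{prop:Proposition15}, replacing the moment exponent $p>2$ used there by the exponent $3$ now supplied by Proposition~\ref{prop:Yuuma4-2}, and the target exponent $2$ by an arbitrary $p\in(0,3)$. First I would record that LERRW on a $b$-regular tree satisfies Assumption~A (Pemantle~\cite{Pemtree}), so Theorem~\ref{Expotail} applies: $\ell_1$ has an exponential tail, and since $\Pi=\Pi_{\ell_1}$ is dominated by $\sum_{i=1}^{\ell_1}Y_i$ with $(Y_i)_i$ i.i.d.\ geometric, $\Pi$ has finite moments of every order and $\bP(\Pi\ge k)$ decays faster than any polynomial.

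The only substantive ingredient is the analogue of Proposition~\ref{propls}: for LERRW on the $4$-regular tree, $\sup_{k}\bE[L_{\sigma_k}^3]<\infty$. I would obtain it by exactly the extension-and-independence argument used there. With $\bar{L}_\nu$ as in \eqref{barl}, one has $L_\nu\le\bar{L}_\nu$ a.s., the variable $\bar{L}_\nu$ is independent of the event $\{\sigma_k=\nu\}$ (the two being generated by disjoint families of clocks), and $\bE[\bar{L}_\nu^3]$ is finite and bounded uniformly in $\nu$ thanks to Proposition~\ref{prop:Yuuma4-2} (the exceptional Dirichlet law at $\r$ being harmless, cf.\ Remark~\ref{assumptionL}); hence $\bE[L_{\sigma_k}^3]\le\sum_\nu\bE[\bar{L}_\nu^3]\,\P(\sigma_k=\nu)\le\sup_\nu\bE[\bar{L}_\nu^3]$.

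With this in place I would reproduce the estimate of Proposition~\ref{prop:Proposition15}. Since the vertices visited before $\tau_1$ are $\sigma_1,\dots,\sigma_\Pi$, we have $\tau_1\le\sum_{k=1}^\Pi L_{\sigma_k}$, so that, for $p\ge1$ by the power-mean inequality (the case $p\le1$ being easier, using subadditivity of $x\mapsto x^p$), $\tau_1^p\le\Pi^{p-1}\sum_{k=1}^\Pi L_{\sigma_k}^p$. Writing $\bE\big[\Pi^{p-1}\sum_{k=1}^\Pi L_{\sigma_k}^p\big]=\sum_{k=1}^\infty\bE\big[L_{\sigma_k}^p\,\Pi^{p-1}\1_{\{\Pi\ge k\}}\big]$ and applying H\"older's inequality with the conjugate pair $3/p$ and $r:=3/(3-p)$ gives
\[
\bE[\tau_1^p]\;\le\;\Big(\sup_{k}\bE[L_{\sigma_k}^3]\Big)^{p/3}\sum_{k=1}^\infty\bE\big[\Pi^{(p-1)r}\1_{\{\Pi\ge k\}}\big]^{1/r}.
\]
By Cauchy--Schwarz the $k$-th term is $O\big(\bP(\Pi\ge k)^{1/(2r)}\big)$, hence summable by the first step, which gives $\bE[\tau_1^p]<\infty$ for every $p\in(0,3)$.

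There is no real obstacle here --- the statement is, as claimed, a by-product --- and the only point needing care is the bookkeeping: the H\"older conjugate $r=3/(3-p)$ must be finite, which is exactly why the conclusion stops short of $p=3$. The restriction to $b=4$ enters only through Proposition~\ref{prop:Yuuma4-2}; for $b\ge5$ the moment bound \eqref{na1} already yields $\bE[\tau_1^2]<\infty$ directly via Proposition~\ref{prop:Proposition15}, and $\bE[\tau_1^p]<\infty$ for some $p\in(2,3)$ is precisely what is needed to run the functional central limit theorem argument.
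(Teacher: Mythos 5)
Your proposal is correct and follows essentially the same route as the paper: bound $\tau_1\le\sum_{k=1}^{\Pi}L_{\sigma_k}$, apply the power-mean inequality and H\"older with conjugate exponents chosen so that the $L_{\sigma_k}$ factor is controlled by the third moment from Proposition~\ref{prop:Yuuma4-2} (via the uniform bound of Proposition~\ref{propls}), and use the superpolynomial tail of $\Pi$ coming from Theorem~\ref{Expotail} to sum over $k$. The only cosmetic differences are that the paper takes conjugates $t,q$ with $tp<3$ strictly while you take $t=3/p$ (both work, since the third moment itself is finite), and the paper reduces to $p\in(1,3)$ by monotonicity of moments rather than invoking subadditivity for $p\le1$.
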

\begin{proof}
{
It is enough to prove the  proposition for $p \in (1, 3)$. We have
\begin{equation}\label{na10}
\begin{aligned}
\bE[\tau_1^p] \le \bE\left[\left(\sum_{k=1}^\Pi L_{\sigma_k}\right)^p\right]\le \bE\left[ \Pi^{p-1} \sum_{k=1}^\Pi L_{\sigma_k}^p\right].
\end{aligned}
\end{equation}
Choose $t,q>1$  conjugates, i.e.   $(1/t)+ (1/q)=1$, and $tp<3$.  By H\"older's inequality, {and using Proposition~\ref{propls}}, we have that
\begin{equation}\label{na11}
\begin{aligned}
 \bE\left[ \Pi^{p-1} \sum_{k=1}^\Pi L_{\sigma_k}^p\right] &=\bE\left[ \sum_{k=1}^\infty L_{\sigma_k}^p  \Pi^{p-1} \1_{\{\Pi \ge k\}}\right]\\
 &\le \sum_{k=1}^\infty {\bE\left[L_{\sigma_k}^{pt}\right]^{1/t}} \bE\left[\Pi^{(p-1)q}\1_{\{\Pi \ge k\}}\right]^{1/q}\\
 &\le (C_{tp})^t \sum_{k=1}^\infty\bE\left[\Pi^{(p-1)q}\1_{\{\Pi \ge k\}}\right]^{1/q} <\infty,
 \end{aligned}
\end{equation}
 where $C_{tp}$ is the same as in Proposition~\ref{propls}.}
\hspace{\fill}
\end{proof}

\section{Appendix}
\begin{proposition}\label{prop:naap} {Let $p>0$.} Consider a random variable $Y$  geometrically distributed  with parameter $\theta \in (0,1)$,  and probability mass function
 $$ \bP(Y= k) = (1- \theta)^k \theta, \qquad \mbox{with $k \ge 0$}.$$
 If we set
 $\lambda \Def - \ln (1- \theta)$,  we have 
\begin{equation}\label{na6} 
 \bE[Y^p] \le  C_p \frac{\theta}{\lambda^{p+1}} +1,
\end{equation}
where $C_p$  is  a  positive finite constant that depends on $p$ but not on $\theta$.
\end{proposition}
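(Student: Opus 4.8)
The plan is to estimate $\bE[Y^p]$ by comparing the geometric distribution to an exponential one and then integrating. Writing $q = 1-\theta = \e^{-\lambda}$, we have $\bP(Y = k) = q^k\theta$ for $k\ge 0$, so $\bP(Y \ge k) = q^k = \e^{-\lambda k}$. I would start from the standard identity $\bE[Y^p] = \sum_{k\ge 1} \bigl(k^p - (k-1)^p\bigr)\bP(Y\ge k)$, or, even more simply, bound $\bE[Y^p] = \sum_{k\ge 0} k^p q^k \theta \le 1 + \int_0^\infty x^p \e^{-\lambda x}\,\d x$, after separating out the $k=0$ term (which contributes $0$) and comparing the sum over $k\ge 1$ with an integral. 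The point is that $x \mapsto x^p\e^{-\lambda x}$ is not monotone, so I would split at its maximum $x = p/\lambda$: on $[0,p/\lambda]$ the function is increasing and on $[p/\lambda,\infty)$ it is decreasing, and in each region the sum $\sum k^p q^k$ is controlled by the corresponding integral plus one maximal term. Alternatively, and more cleanly, just use $k^p q^k \le \sup_{x\ge 0}(x^p\e^{-\lambda x/2})\cdot \e^{-\lambda k/2}$ term by term.

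Concretely, I would argue as follows. For every $k\ge 0$, $k^p q^k = k^p \e^{-\lambda k} = \bigl(k^p \e^{-\lambda k/2}\bigr)\e^{-\lambda k/2} \le M_p \lambda^{-p}\e^{-\lambda k/2}$, where $M_p := \sup_{u\ge 0} u^p\e^{-u/2} = (2p/\e)^p$ is a finite constant depending only on $p$ (obtained by substituting $u = \lambda k$). Summing the geometric series,
$$
\bE[Y^p] = \theta\sum_{k=0}^\infty k^p q^k \le M_p \lambda^{-p}\,\theta \sum_{k=0}^\infty \e^{-\lambda k/2} = M_p \lambda^{-p}\,\frac{\theta}{1 - \e^{-\lambda/2}}.
$$
Now I need $1 - \e^{-\lambda/2} \ge c\,\lambda$ for $\lambda$ in a suitable range; this fails as $\lambda\to\infty$, so this exact route needs the small-$\lambda$ regime handled separately. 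The cleanest fix is: when $\lambda \ge 1$ (equivalently $\theta \ge 1 - \e^{-1}$), the random variable $Y$ is stochastically dominated by a geometric variable with a fixed parameter bounded away from $0$, so $\bE[Y^p]$ is bounded by an absolute constant, and one checks that constant is $\le C_p\,\theta\lambda^{-(p+1)}$ by using $\theta \ge 1-\e^{-1}$ and $\lambda^{-(p+1)}$ bounded below; when $\lambda \le 1$, use $1 - \e^{-\lambda/2}\ge \lambda/2 - \lambda^2/8 \ge \lambda/4$ (valid for $\lambda\le 1$, in fact for $\lambda \le 2$), giving $\bE[Y^p]\le 4M_p\,\theta\,\lambda^{-(p+1)}$. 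Taking $C_p$ to be the larger of the two constants and adding the harmless $+1$ from the statement absorbs any leftover slack, yielding \eqref{na6}.

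The main obstacle — really the only subtlety — is the non-uniformity of the bound $1-\e^{-\lambda/2}\gtrsim \lambda$ across all of $\lambda\in(0,\infty)$: the inequality $1 - \e^{-\lambda/2}\ge \lambda/4$ holds only for bounded $\lambda$, so the argument genuinely must be split into the regime $\theta$ close to $1$ (i.e.\ $\lambda$ large, where the moment is just an absolute constant and one uses that $\theta/\lambda^{p+1}$ is then bounded below) and the regime $\theta$ small (i.e.\ $\lambda$ small, where the exponential-integral comparison is sharp and produces exactly the $\theta/\lambda^{p+1}$ behaviour). Everything else is a one-line supremum computation for $M_p$ and summing a geometric series. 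I would present it with that case split made explicit so the reader sees why the factor $\theta$ rather than $1$ appears in the leading term: for small $\theta$, $\theta \asymp \lambda$, and the naive exponential bound $\int_0^\infty x^p\e^{-\lambda x}\,\d x = \Gamma(p+1)\lambda^{-(p+1)}$ would be off by exactly this factor of $\theta \asymp \lambda$ relative to the truth, which the discrete sum $\theta\sum k^p q^k$ recovers automatically.
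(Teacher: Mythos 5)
Your overall strategy is sound but genuinely different from the paper's. The paper treats $f(x)=x^p{\rm e}^{-\lambda x}$ as a unimodal function and bounds $\sum_{k\ge 0}f(k)$ by $\int_0^\infty f(u)\,{\rm d}u$ plus a single term of the order of the maximum, which gives $\bE[Y^p]\le 1+\theta\,(p^{p+1}{\rm e}^{-p}+\Gamma(p+1))\,\lambda^{-(p+1)}$ in one stroke, uniformly in $\theta$, with no case split. Your term-by-term absorption $k^p{\rm e}^{-\lambda k}\le M_p\lambda^{-p}{\rm e}^{-\lambda k/2}$ followed by summing a geometric series is perfectly correct for $\lambda\le 1$ (the inequality $1-{\rm e}^{-\lambda/2}\ge \lambda/4$ does hold there, and yields $C_p=4(2p/{\rm e})^p$), but it is precisely what forces the case split, and the paper's sum--integral comparison avoids that entirely.

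The genuine problem is your treatment of $\lambda\ge 1$. You argue that $\bE[Y^p]$ is bounded by an absolute constant $K_p$ and that $K_p\le C_p\,\theta\lambda^{-(p+1)}$ because ``$\lambda^{-(p+1)}$ is bounded below''; but $\lambda^{-(p+1)}$ is not bounded below on $[1,\infty)$ --- it tends to $0$ --- so no choice of $C_p$ makes $K_p\le C_p\,\theta\lambda^{-(p+1)}$ hold for all $\lambda\ge1$, and the ``$+1$'' does not rescue the step near $\lambda=1$, where $\bE[Y^p]$ comfortably exceeds $1$ (for $p=3$ and $\lambda=1$ one computes $\bE[Y^3]\approx 3.8$). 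What your ``absolute constant'' bound throws away is that for $\lambda\ge1$ the moment actually decays exponentially: for $k\ge1$ one has ${\rm e}^{-\lambda k}\le {\rm e}^{-\lambda/2}{\rm e}^{-k/2}$, hence $\bE[Y^p]\le {\rm e}^{-\lambda/2}\sum_{k\ge1}k^p{\rm e}^{-k/2}=A_p\,{\rm e}^{-\lambda/2}$, and since $\sup_{\lambda\ge1}\lambda^{p+1}{\rm e}^{-\lambda/2}<\infty$ and $\theta\ge 1-{\rm e}^{-1}$, this is indeed at most $C_p\,\theta\lambda^{-(p+1)}$. With that substitution (or with a further split of $[1,\infty)$ at some $\lambda_p$ beyond which $\bE[Y^p]\le1$, using compactness of $[1,\lambda_p]$ to exploit the ``$+1$'') your proof closes; as written, the large-$\lambda$ step is false.
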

\begin{proof}
 First  notice that the function $f \colon x \mapsto x^p {\rm e}^{- \lambda x}$, for $x \ge 0$, achieves its unique maximum at $x^* = p/\lambda$.  As $f$ is non-negative, and it is decreasing in the interval $[x^*, \infty)$, we have the following estimate
$$ \sum_{k=0}^\infty f(k) \le x^* f(x^*) + \int_0^\infty f(u) \d u.$$
Hence
$$
\begin{aligned}
 \bE[Y^p]  &=  \sum_{k=0}^\infty k^p (1-\theta)^{k} \theta \\
 &\le 1+  \theta\frac{p}{\lambda}\left(\frac{p}{\lambda}\right)^p {\rm e}^{-p}  +  \theta  \int_{0}^\infty  x^p {\rm e}^{- x \lambda} \d x\\
 &= 1+ \theta\frac{p}{\lambda}\left(\frac{p}{\lambda}\right)^p {\rm e}^{-p}+ \theta  \frac{\Gamma(p+1)}{\lambda^{p+1}}\\
 &=:  1+  C_p \frac{\theta}{\lambda^{p+1}}.
 \end{aligned}
$$ 
\hfill 
\end{proof}

\begin{ack}
{}
A.C. is grateful to Yokohama National University for its hospitality, and he was supported by ARC grant  DP180100613, Australian Research Council Centre of Excellence for Mathematical and Statistical Frontiers (ACEMS). CE140100049, {and YNU iROUTE project}. M.T. is partially supported by JSPS Grant-in-Aid for Young Scientists (B) No. 16K21039. {The authors thank the anonymous referee for detailed comments. Finally they thank Amanoya for offering a very nice environment, where part of this research was carried.}
\end{ack}

\end{document}